\documentclass[11pt]{article}

\usepackage{amsmath,amsthm,thmtools,amssymb,amsfonts, graphicx}
\usepackage{graphics}
\usepackage{authblk}
\usepackage{upgreek}
\usepackage{amsrefs}
\usepackage[hypertexnames=false]{hyperref}
\usepackage{cleveref}
\usepackage[left=0.5in,right=1in,top=0.7in,bottom=1in]{geometry}
\usepackage{longtable}
\usepackage{epstopdf,tcolorbox}
\usepackage{empheq}
\usepackage{authblk}
\theoremstyle{plain}
\newtheorem{theorem}{Theorem}[section]

\newtheorem{definition}[theorem]{Definition}

\newtheorem{lemma}[theorem]{Lemma}
\newtheorem{proposition}[theorem]{Proposition}

\theoremstyle{definition}
\newtheorem{remark}[theorem]{Remark}
\numberwithin{equation}{section}
\newcommand{\diff}{\mathop{}\!\mathrm{d}}

\newcommand{\rb}{\bar{r}}
\newcommand{\zb}{\bar{z}}

\title{Global regularity for axisymmetric, swirl-free solutions of the Euler equation in four dimensions}

\author[1]{Evan Miller}
\affil[1]{University of Maine,
Department of Mathematics and Statistics}
\affil[1]{evan.miller1@maine.edu}

\begin{document}

\maketitle

\begin{abstract}
In this paper, we prove global regularity for all smooth, axisymmetric, swirl-free solutions of the incompressible Euler equation in four dimensions. Previous works establishing global regularity for certain axisymmetric, swirl-free solutions of the Euler equation in four dimensions required the additional assumption that 
$\frac{\omega^0}{r^2}\in L^\infty$, which can fail even for Schwartz class initial data. For discussion of another contemporaneous result removing this condition, see \Cref{SWZremark}. The key advance in this paper is a new bound on the vortex stretching term that only requires
$\frac{\omega^0}{r^2}\in L^{2,1}(\mathbb{R}^4)$, a condition which holds generically for any axisymmetric, swirl-free initial data $u^0\in H^s\left(\mathbb{R}^4\right), s>4$, with reasonable decay at infinity.
\end{abstract}

\section{Introduction}

The incompressible Euler equation is a fundamental equation in fluid mechanics, describing the motion of an idealized fluid with no viscosity. The Euler equation can be expressed as
\begin{align}
\partial_t u+ (u\cdot\nabla)u+\nabla p
&=0 \\
\nabla \cdot u&= 0,
\end{align}
where $u\in \mathbb{R}^d$ is the velocity and $p$ is the pressure. The first equation expresses Newton's second law, $\Vec{F}=m\Vec{a}$, and the second equation expresses conservation of mass. Using the Helmholtz decomposition, the evolution equation can be expressed in terms of the projection onto divergence free vector fields as
\begin{equation}
    \partial_t u
    +\mathbb{P}_{df}(u\cdot\nabla)u
    =0.
\end{equation}

An important quantity in the dynamics of the Euler equation is the vorticity, which mathematically is the anti-symmetric part of the velocity gradient $A_{ij}=\frac{1}{2}\left(\partial_iu_j-\partial_j u_i\right)$, and physically describes the rotation induced by the fluid flow. In two dimensions, the vorticity can be expressed as the scalar function $\omega=\partial_1 u_2-\partial_2 u_1$, and in three dimensions as the vector-valued function $\omega=\nabla \times u$.

In two dimensions, the vorticity is advected by the flow with 
\begin{equation}
    \partial_t \omega+ (u\cdot \nabla)\omega=0.
\end{equation}
This leads to global regularity, which was first proven by Wolibner \cite{Wolibner} and Hölder \cite{Holder}.

In three dimensions, in addition to advection, there is also vortex stretching with
\begin{equation}
\partial_t\omega 
+(u\cdot\nabla)\omega
-(\omega\cdot\nabla)u
=0.
\end{equation}
This equation can also be expressed equivalently as
\begin{equation}
\partial_t\omega 
+(u\cdot\nabla)\omega
-S\omega
=0,
\end{equation}
where $S$ is the symmetric part of the velocity gradient, $S_{ij}=\frac{1}{2}(\partial_iu_j+\partial_ju_i)$.
It is the difficulty of controlling the vortex stretching term $S\omega$ that makes global regularity such a challenging open problem in three dimensions. Vortex stretching is also a feature in four and higher dimensions, and indeed is stronger relative to advection than in the three dimensional case. In this case the evolution of the vorticity is given by
\begin{equation}
\partial_t A+(u\cdot\nabla) A
+SA+AS=0.
\end{equation}

One important class of solutions when $d\geq 3$ is axisymmetric, swirl-free solutions. We will say that a vector field is axisymmetric, swirl-free, if
\begin{equation}
    u(x)=u_r(r,z)e_r+u_z(r,z)e_z,
\end{equation}
where 
\begin{align}
    r&=(x_1^2+...+x_{d-1}^2)^\frac{1}{2} \\
    z&= x_d \\
    e_r&= \frac{1}{r}(x_1,...,x_{d-1},0) \\
    e_z&= e_d=(0,...,0,1).
\end{align}
Note that this symmetry is preserved by the dynamics of the Euler equation. In particular, if the initial data $u^0\in H^s_{df}\left(\mathbb{R}^d\right), s>1+\frac{d}{2}$, is axisymmetric and swirl-free, then the solution of the Euler equation $u\in C\left([0,T_{max});
H^s_{df}\left(\mathbb{R}^d\right)\right)$ is also axisymmetric and swirl-free for all $0<t<T_{max}$.
For axisymmetric, swirl-free vector fields, the divergence-free constraint can be expressed by
\begin{equation}
\nabla\cdot u= 
\partial_ru_r +\partial_z u_z
+\frac{d-2}{r}u_r=0,
\end{equation}
and the scalar vorticity is given by
\begin{equation}
\omega(r,z)=\partial_r u_z-\partial_z u_r.
\end{equation}
This is related to the antisymmetric part of 
the velocity gradient $A$ by
\begin{equation}
    A(x)=\frac{1}{2}\omega(r,z)(e_r\otimes e_z-e_z\otimes e_r).
\end{equation}
These results are all essentially classical, but for a derivation in arbitrary dimension $d\geq 3$, see \cite{MillerTsai}.

For axisymmetric, swirl-free solutions of the Euler equation, the vorticity has the evolution equation
\begin{equation} \label{VortEqn}
    \partial_t \omega+(u\cdot\nabla)\omega
    -(d-2)\frac{u_r}{r}\omega
    =0,
\end{equation}
and this evolution equation completely determines the dynamics of the solution, because the velocity can be recovered from the vorticity by
\begin{align} 
    u_r(r,z) \label{RadVelo}
    &=
    \frac{d-2}{\pi}
    \int_{-\infty}^\infty \int_0^\infty 
    \rb^{d-2} (\zb-z) \omega(\rb,\zb)  
    \int_{-1}^1 \frac{\tau\left(1-\tau^2\right)^\frac{d-4}{2}}
    {\left(r^2+\rb^2-2r\rb \tau 
    +(z-\zb)^2\right)^\frac{d}{2}} 
    \diff \tau \diff\rb \diff \zb \\
    u_z(r,z) \label{VertVelo}
    &=
    \frac{d-2}{\pi}
    \int_{-\infty}^\infty \int_0^\infty 
    \rb^{d-2} \omega(\rb,\zb)  
    \int_{-1}^1 \frac{(r \tau-\rb)
    \left(1-\tau^2\right)^\frac{d-4}{2}}
    {\left(r^2+\rb^2-2r\rb \tau 
    +(z-\zb)^2\right)^\frac{d}{2}} 
    \diff \tau \diff\rb \diff \zb.
\end{align}
For a detailed derivation, see section 4 in \cite{MillerTsai}.

The vortex stretching term can be absorbed into the advection by dividing the vorticity by $r^{d-2}$, yielding the transport equation
\begin{equation}
    (\partial_t +u\cdot\nabla)
    \frac{\omega}{r^{d-2}}
    =0.
\end{equation}
When $d=3$, the transport equation
\begin{equation}
    (\partial_t+u\cdot\nabla)\frac{\omega}{r}=0,
\end{equation}
yields global regularity for all smooth solutions, which was first proven by Yudovich \cite{Yudovich}.
We will note that for any smooth data, we must have $\frac{\omega^0}{r}\in L^\infty$, because $e_r$ has a singularity at the axis, so $\omega$ must vanish linearly at the axis in order to have $\nabla A\in L^\infty$. This is key to the proof. Interestingly, when $\frac{\omega^0}{r}\notin L^\infty$, Elgindi showed that $C^{1,\alpha}$ solutions of the Euler equation that are axisymmetric and swirl-free can blowup in finite-time \cite{Elgindi}. 
For these solutions, $\omega(r,z)\sim r^\alpha$ with $\alpha \ll 1$ as $r \to 0$.

This opens the door to singularity formation for smooth solutions of the axisymmetric, swirl-free Euler equation when $d\geq 4$, because in this case $\frac{\omega}{r^{d-2}}$ can be unbounded even for Schwartz class vector fields. This makes global regularity a much more challenging problem in four and higher dimensions. Nonetheless, we will prove a very generic global regularity result when $d=4$.

\begin{theorem} \label{GlobalExistenceThmIntro}
    Suppose $u^0 \in H^s_{df}\left(\mathbb{R}^4\right), s>4$ is axisymmetric, swirl-free and that $\frac{\omega^0}{r^2}\in L^{2,1}\left(\mathbb{R}^4\right)$. Then there exists a unique, global smooth solution of the Euler equation $u\in C\left([0,+\infty); H^s_{df}\left(\mathbb{R}^4
    \right)\right)$, and for all $0<t<+\infty$, this solution is axisymmetric, swirl-free and satisfies
    \begin{equation}
    \|\omega(\cdot,t)\|_{L^\infty}
    \leq 
    \left\|\omega^0\right\|_{L^\infty}
    \exp\left(C\left\|\frac{\omega^0}{r^2}
    \right\|_{L^{2,1}}t\right),
    \end{equation}
    where $C>0$ is an absolute constant independent of $u^0$. Furthermore, the condition $\frac{\omega^0}{r^2}\in L^{2,1}$ holds for all axisymmetric, swirl-free $u^0\in H^s_{df}\left(\mathbb{R}^4\right), s>4$, such that for all $r\geq 0, z\in\mathbb{R}$,
    \begin{equation}
    |\omega^0(r,z)|\leq \frac{C'}{(1+r^2+z^2)^2}.
    \end{equation}
\end{theorem}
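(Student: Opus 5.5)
The plan is to control the vortex stretching factor $\frac{u_r}{r}$ in $L^\infty$ by the conserved quantity $\left\|\frac{\omega}{r^2}\right\|_{L^{2,1}}$, then close with Gronwall and the Beale--Kato--Majda criterion.

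\textbf{Conservation of the weighted norm.} Since $\frac{\omega}{r^2}$ satisfies $(\partial_t+u\cdot\nabla)\frac{\omega}{r^2}=0$ and $u$ is divergence free in $\mathbb{R}^4$, the flow map preserves Lebesgue measure. Hence the distribution function of $\frac{\omega}{r^2}$ is invariant in time. In particular every rearrangement-invariant norm is conserved, and
\begin{equation*}
\left\|\frac{\omega(\cdot,t)}{r^2}\right\|_{L^{2,1}}=\left\|\frac{\omega^0}{r^2}\right\|_{L^{2,1}}.
\end{equation*}

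\textbf{Key estimate (main obstacle).} I would prove
\begin{equation*}
\left\|\frac{u_r}{r}\right\|_{L^\infty}\leq C\left\|\frac{\omega}{r^2}\right\|_{L^{2,1}}.
\end{equation*}
Using \eqref{RadVelo} with $d=4$ (so $(1-\tau^2)^0=1$), write $\frac{u_r}{r}(r,z)=\int K(r,z,\rb,\zb)\frac{\omega}{\rb^2}(\rb,\zb)\,\rb^3\diff\rb\diff\zb$, where $\rb^3\diff\rb\diff\zb$ is, up to a constant, the Lebesgue measure on $\mathbb{R}^4$ and
\begin{equation*}
K=\frac{\rb(\zb-z)}{r}\int_{-1}^1\frac{\tau\,\diff\tau}{(r^2+\rb^2-2r\rb\tau+(z-\zb)^2)^2}.
\end{equation*}
By duality between $L^{2,1}$ and $L^{2,\infty}$, it suffices to show that $K(r,z,\cdot)$, viewed as a function on $\mathbb{R}^4$, lies in $L^{2,\infty}$ uniformly in $(r,z)$. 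The homogeneity count is the point: a kernel decaying like $|x-y|^{-2}$ in four dimensions sits exactly in $L^{2,\infty}$.

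To verify this, I would compute the $\tau$ integral explicitly. Since $\tau$ is odd, it can be written as a difference of terms $\frac{1}{r\rb}\cdot\frac{1}{a-2r\rb\tau}\big|_{\tau=\pm1}$ plus a logarithm. One then checks $|K|\lesssim |(r,z)-(\rb,\zb)|^{-2}$ in the regions $\rb\gtrsim r$ and $\rb\ll r$ separately. In the region $\rb\ll r$, a Taylor expansion in $\tau$ gives an extra factor $\frac{r\rb}{\cdot}$ that cancels the $\frac{1}{r}$ prefactor. Near the diagonal $\rb\approx r$, the logarithmic singularity in the two-dimensional $(r,z)$ variables needs care: the set where $|K|>\lambda$ must have four-dimensional measure at most $C\lambda^{-1}$. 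Since the four-dimensional measure carries the factor $\rb^3$, a scaling argument reduces this to $r=1$, where it is a local check. I expect this uniform weak-$L^2$ bound on the kernel to be the main technical difficulty.

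\textbf{Closing the argument.} Along trajectories, \eqref{VortEqn} gives $\frac{D}{Dt}\omega=2\frac{u_r}{r}\omega$. Combining this with the key estimate yields
\begin{equation*}
\|\omega(t)\|_{L^\infty}\leq\|\omega^0\|_{L^\infty}\exp\left(C\left\|\frac{\omega^0}{r^2}\right\|_{L^{2,1}}t\right).
\end{equation*}
Since $|A|\sim|\omega|$, the Beale--Kato--Majda criterion gives $T_{max}=\infty$. Uniqueness and persistence of the symmetry follow from standard local theory.

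\textbf{Sufficient condition.} For the final claim, smoothness gives $|\omega^0|\lesssim r$ near the axis, since $\omega$ is odd in $r$. More precisely, $\omega^0/r$ is bounded because $\nabla u\in C^1$ for $s>4$. Near the axis, therefore, $\left|\frac{\omega^0}{r^2}\right|\lesssim\frac{1}{r}\min\left(1,\frac{1}{(1+|x|^2)^{2}}\cdot r^{-1}\right)$. A function bounded by $\frac{1}{r}$ on a bounded set lies in $L^{2,1}$ locally in $\mathbb{R}^4$, because the measure of $\{r<\epsilon\}\cap B$ is of order $\epsilon^3$. Away from that set, $\left|\frac{\omega^0}{r^2}\right|\lesssim\frac{1}{r^2(1+|x|^2)^2}$; its distribution function is computed by splitting into $r\leq 1$ and $r\geq1$, giving $\lambda$-level-set measure $\lesssim\lambda^{-3/2}$ or better, which is summable in the $L^{2,1}$ norm.
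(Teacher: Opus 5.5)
Your architecture is the paper's: measure-preserving transport of $\frac{\omega}{r^2}$ conserves its $L^{2,1}$ quasi-norm, $L^{2,1}$--$L^{2,\infty}$ duality bounds $\frac{u_r}{r}$, and Gr\"onwall plus Beale--Kato--Majda finish. The crux step, the kernel estimate, fails as you set it up.

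\textbf{The measure is wrong.} For $d=4$, $r$ is the radius in $\mathbb{R}^3$, so $dx=4\pi r^2\,dr\,dz$, not $r^3\,dr\,dz$. You use the correct weight yourself when you say $\{r<\epsilon\}\cap B$ has measure of order $\epsilon^3$. Your $K$ is homogeneous of degree $-3$ in $(r,z,\bar r,\bar z)$, so $\|K(r,z,\cdot)\|_{L^{2,\infty}(\bar r^3 d\bar r\, d\bar z)}$ scales like $|(r,z)|^{-1/2}$ and is not uniformly bounded. In fact the inequality $\|\frac{u_r}{r}\|_{L^\infty}\le C\|\frac{\omega}{r^2}\|_{L^{2,1}(r^3dr\,dz)}$ is false by scaling, and that norm is neither the hypothesis norm nor conserved by the flow. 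For the same reason the bound $|K|\lesssim|(r,z)-(\bar r,\bar z)|^{-2}$ cannot hold. Also, weak $L^2$ requires level sets of measure $\lesssim\lambda^{-2}$, not $\lambda^{-1}$.

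\textbf{The pointwise target is too weak.} With the correct weight, the kernel is $\tilde K=\bar r K$, homogeneous of degree $-2$, paired with $\frac{\omega}{\bar r^2}$ against $\bar r^2\,d\bar r\,d\bar z$. Set $\rho=|(r,z)-(\bar r,\bar z)|$. The bound $|\tilde K|\lesssim\rho^{-2}$ is true but useless. Near $(r,z)$ the weight is $\approx r^2$ and $\rho^{-2}$ is not locally integrable in the plane, so $\mu(\{\rho^{-2}>\lambda\})\approx\pi r^2\lambda^{-1}$ for large $\lambda$. The diagonal singularity is also not logarithmic. There the $\tau$-integral is $\approx\frac{1}{2r^2\rho^2}$, so $\tilde K\approx\frac{\bar z-z}{2r\rho^2}=O(\frac{1}{r\rho})$.

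\textbf{What is missing.} You need one bound that captures both the cancellation from oddness in $\tau$ and this $\frac1\rho$ singularity. The cancellation is essential for $\bar r\gg r$, where the prefactor $\frac1r$ blows up as $r\to 0$, not only for $\bar r\ll r$. Let $a=r^2+\bar r^2+(z-\bar z)^2$ and $b=2r\bar r$, so that $a-b=\rho^2$. Symmetrizing in $\tau$ gives
\[
0\le\int_{-1}^{1}\frac{\tau\,d\tau}{(a-b\tau)^2}=\int_0^1\frac{4ab\tau^2\,d\tau}{(a-b\tau)^2(a+b\tau)^2}\le\frac{4}{a}\int_0^1\frac{b\,d\tau}{(a-b\tau)^2}=\frac{4b}{a^2(a-b)}.
\]
Hence $|\tilde K|\le\frac{8\bar r^3|\bar z-z|}{a^2\rho^2}\le\frac{8}{a^{1/2}\rho}$. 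The $L^{2,\infty}(\bar r^2\,d\bar r\,d\bar z)$ norm of this majorant is independent of $(r,z)$ by dilation and $z$-translation. At $(r,z)=(1,0)$ its level sets lie in the disk $\rho<8/\lambda$ for large $\lambda$, and in a ball of radius $O(\lambda^{-1/2})$ for small $\lambda$. Both have measure $O(\lambda^{-2})$. This is the content of Proposition~\ref{VeloBoundProp} and Lemma~\ref{gLemma}.

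\textbf{Sufficient condition.} Your sketch has a smaller flaw here. Near the axis but far from the origin, the bound $\frac{1}{r^2(1+|x|^2)^2}$ alone gives level sets of measure $\sim\lambda^{-3/2}$ as $\lambda\to\infty$. That is not summable in $L^{2,1}$, since $\int^\infty\lambda^{-3/4}\,d\lambda=\infty$. You must keep the minimum with $\frac{C}{r}$ in that region too. Alternatively, interpolate as the paper does to $|\frac{\omega^0}{r^2}|\le Cr^{-4/3}(1+r^2+z^2)^{-2/3}$, which lies in $L^{7/4}\cap L^{17/8}\subset L^{2,1}$.
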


    \begin{remark}
    The key element of the proof is a new bound on the vortex stretching term:
    \begin{equation} \label{StretchingBoundIntro}
    \left\|\frac{u_r}{r}
    \right\|_{L^\infty
    \left(\mathbb{R}^4\right)}
    \leq C\left\|\frac{\omega}{r^2}\right\|_{L^{2,1}
    \left(\mathbb{R}^4\right)}.
    \end{equation}
    While there is no guarantee that $\frac{\omega^0}{r^2}$ is bounded, even for smooth data, we can guarantee that $\frac{\omega^0}{r^2}\in L^{2,1}\left(\mathbb{R}^4\right)$, for all sufficiently smooth data with reasonable decay at infinity. This norm is strong enough to control the vortex stretching term, and is also preserved by the dynamics of the Euler equation, because $\frac{\omega}{r^2}$ is transported by the flow when $d=4$, and so this leads to global regularity.
    \end{remark}

    \begin{remark}
    The regularity and decay assumptions on the initial data in \Cref{GlobalExistenceThmIntro} are somewhat stronger than absolutely necessary and could be relaxed somewhat. In particular, the rate of decay at infinity could be optimized more, and the bound $\frac{\omega}{r}\in L^\infty$---which comes from the Sobolev embedding $H^{2+\epsilon} \hookrightarrow L^\infty$---could probably be relaxed as well. However, \Cref{GlobalExistenceThmIntro} completely resolves the global regularity problem for axisymmetric, swirl-free solutions of the Euler equation in four dimensions with smooth enough data and moderate decay at infinity.

    All previous global regularity results for axisymmetric, swirl-free solutions of the Euler equation in four dimensions did not apply to generic smooth data with decay at infinity, because they required that $\frac{\omega^0}{r^2}\in L^\infty$, which can fail even for Schwartz class initial data. In particular, Lim and Jeong proved global regularity for axisymmetric, swirl-free solutions of the Euler equation when $d=4$ if $\frac{\omega^0}{r^2}\in L^\infty$ and $\omega^0$ is compactly supported \cite{LimJeongARMA}, giving the bound
    \begin{equation}
    \|\omega(\cdot,t)\|_{L^\infty}
    \leq 
    C_{\omega^0}(1+t)^2,
    \end{equation}
    where $C_{\omega^0}>0$ is a constant depending on the initial data. 
    This improved an earlier global regularity result---also assuming that 
    $\frac{\omega^0}{r^2}\in L^\infty$---proven by Choi, Jeong, and Lim \cite{ChoiJeongLimProc}, but with a bound that is exponential in time, rather than quadratic. This weaker result was also derived by the author and Tsai in \cite{MillerTsai}. Furthermore, Lim proved global regularity in all dimensions $d\geq 3$ when $\frac{\omega^0}{r^{d-2}}\in L^\infty$ under the additional assumption that $\omega^0$ has a single sign \cite{LimSingleSign}. Note that both of these properties are preserved by the dynamics of the Euler equation.

    We emphasize that the improvement from the requirement that $\frac{\omega^0}{r^2}\in L^\infty$ in these previous papers, to the requirement $\frac{\omega^0}{r^2}\in L^{2,1}$ is not merely technical, because the later holds for generic data in $H^s_{df}\left(\mathbb{R}^4\right), s>4$ with reasonable decay at infinity, while the former can fail for even Schwartz class initial data. 
    For example, let
    \begin{equation}
    u(x)
    =
    \exp\left(-r^2-z^2\right)
    \left(r(1-2z^2)e_r
    -z(3-2r^2)e_z\right).
    \end{equation}
    We can then compute that $\nabla\cdot u=0$ and that
    \begin{equation}
    \omega(r,z)=4rz(4-r^2-z^2)
    \exp\left(-r^2-z^2\right),
    \end{equation}
    and so clearly $\frac{\omega}{r^2}\notin L^\infty$.
    For a more detailed discussion, see Proposition 3.23 in \cite{MillerTsai}.
    \end{remark}

    \begin{remark}
        There has recently been a significant amount of research activity focused on the possibility of singularity formation for the Euler equation in very high dimension. While the Euler equation for $d\geq 4$ is not physically realistic, it is nonetheless a mathematically interesting test case for the finite-time blowup problem. The author \cite{MillerInfiniteDimEuler} and Elgindi and Drivas \cite{ElgindiDrivas} both observed blowup of a Burgers shock type when taking the formal infinite dimensional limit of the Euler equation, and these papers considered very different geometries, with the former taking the infinite-dimensional limit of axisymmetric swirl-free solutions, while the latter considered the infinite dimensional limit of solutions on the torus with a strictly upper triangular gradient.

        For axisymmetric, swirl-free solutions, the natural geometry to consider for finite-time blowup is a vorticity that is odd in $z$ and positive in the upper half plane, $z>0$. Physically, this corresponds to colliding vortex rings. This is precisely the geometry in which Elgindi proved finite-time blowup for $C^{1,\alpha}$ solutions in \cite{Elgindi}.
        In this same geometry, Gustafson, Tsai, and the author \cite{GMT} proved a power-law-type lower bound on the growth of vorticity in any dimension $d\geq 3$, which followed on earlier work in dimension $d=3$ by Choi and Jeong \cite{ChoiJeongGrowth}. 
        Hou and Zhang studied the problem of axisymmetric swirl-free solutions of the Euler equation in higher dimensions numerically, and observed more singular behaviour as the dimension increased \cite{HouZhang}. 
        The author and Tsai proved an identity for the evolution of a particular moment of the vorticity in terms of the partition of the kinetic energy into radial and vertical components that heuristically suggests finite-time blowup in very high dimension $d\gg 3$---see Theorem 1.6 in \cite{MillerTsai}.
    \end{remark}

\begin{remark} \label{SWZremark}
    One month after this manuscript was first posted on the arXiv, Shao, Wei, and Zhang published a major breakthrough paper \cite{SWZ} on this problem, proving global regularity for axisymmetric, swirl-free solutions of the Euler equation in $\mathbb{R}^d$ for $4\leq d\leq 6$. Furthermore, they prove global regularity for all $d\geq 4$ if when the domain is a cylinder. The argument is based on control of a very ingeniously chosen Lyapunov functional:
    \begin{align} \label{LyapunovSWZ}
    \Omega_\alpha(r,z,t)
    &=\min(r,R(t))^{d-2-\alpha}
    \left(\frac{\omega(r,z,t)}{r^{d-2}}\right) \\
    &=\begin{cases}
        \frac{\omega(r,z,t)}{r^\alpha}, 
        &r\leq R(t) \\
        R(t)^{d-2-\alpha}\left(\frac{\omega(r,z,t)}
        {r^{d-2}}\right), 
        &r\geq R(t)
    \end{cases},
    \end{align}
    where $0<\alpha<1$, and $R(t)$ goes to zero double exponentially fast.
    This Lyapunov functional exploits special cancellation near the axis $r=0$ that allows the vortex stretching term to be controlled in terms of the $L^{\frac{d}{d-2},\infty}$ norm near the axis in a way that is impossible away from the axis for $d\geq 5$. Away from the axis---that is when $r>R(t)$---$\Omega_\alpha$ is just the transported term multiplying by a decreasing factor, and so control is immediate.

    Given the time required by the peer review process, there is no doubt that Shao, Wei, and Zhang were the first to prove global regularity when $d=4$. Nonetheless, when this paper first appeared on the arXiv, there were no global regularity results in the literature for axisymmetric, swirl-free solutions of the Euler equation with generic initial data in dimension four or higher. In this sense, the two results are contemporaneous for the case $d=4$.

    This paper shows that it is possible to prove global regularity when $d=4$ by a direct, global bound on vortex stretching, whereas for $d=5,6$ the more subtle machinery employed in \cite{SWZ} to control the Lyapunov functional \eqref{LyapunovSWZ} is unavoidable.
     For the case $d=4$, Theorem 1.3 in \cite{SWZ} is slightly stronger than \Cref{GlobalExistenceThmIntro} in this paper in the sense that their result requires the slightly weaker condition $\frac{\omega^0}{r^2}\in L^{2,\infty}$ rather than $L^{2,1}$, but is slightly weaker in that it gives a triple exponential bound on the growth of vorticity, rather than an exponential bound. Their approach is, of course, considerably stronger in the sense that it also yields global regularity when $d=5,6$, which is out of reach of the methods in this paper.
\end{remark}

\begin{remark}
    There are some similarities between the methods used in this paper to those used by Danchin in \cite{Danchin} to prove global regularity for axisymmetric swirl-free solutions of the Euler equation when $d=3$. Danchin proved global regularity when $d=3$ under the assumptions $\frac{\omega}{r}\in L^{3,1}$ and $\omega\in L^\infty \cap L^{3,1}$, which requires less regularity than the initial global regularity proof due to Yudovich, and which, like the result in this paper, has a key bound that involves controlling the vortex stretching term.

    The key difference in approach, beyond the fact that \cite{Danchin} deals with $d=3$ rather than $d=4$, is that the current paper exploits the fact that the Biot-Savart law for axisymmetric, swirl-free solutions has a locally two dimensional structure, even in higher dimension. In particular in two dimensions, we have the Sobolev-type inequality:
    \begin{equation}
    \|f\|_{L^\infty\left(\mathbb{R}^2\right)}
    \leq 
    C\|\nabla f\|_{L^{2,1}\left(\mathbb{R}^2\right)}.
    \end{equation}
    Because $\frac{u_r}{r}$ and $\frac{\omega}{r}$ are both continuous functions and $\omega$ is a derivative of $u$, we might expect a bound of the form 
    \begin{equation} \label{RemarkBoundA}
    \left\|\frac{u_r}{r}\right\|_{L^\infty}
    \leq 
    C \left\|\frac{\omega}{r}
    \right\|_{L^{2,1}(\diff r \diff z)}.
    \end{equation}
    It is straightforward to show that this bound in fact holds from \Cref{VeloBoundProp}, but this ends up being less useful, because it is the physically meaningful measure $r^2\diff r\diff z$, corresponding to the volume in $\mathbb{R}^4$, that is important.
    The bound that we will actually use is
    \begin{equation} \label{RemarkBoundB}
    \left\|\frac{u_r}{r}
    \right\|_{L^\infty}
    \leq C\left\|\frac{\omega}{r^2}\right\|_{L^{2,1}
    \left(r^2\diff r\diff z\right)}.
    \end{equation}
    This is roughly equivalent, because while the right hand sides of the bounds in \eqref{RemarkBoundA} and \eqref{RemarkBoundB} are not equivalent, if we replace the $L^{2,1}$ norm with the $L^2$ norm, then these bounds are exactly equivalent with
    \begin{equation}
    \left\|\frac{\omega}{r^2}
    \right\|_{L^2(r^2\diff r \diff z)}^2
    =
    \int_{-\infty}^\infty \int_0^\infty 
    \left|\frac{\omega(r,z)}{r^2}\right|^2 r^2\diff r\diff z
    =
    \int_{-\infty}^\infty \int_0^\infty 
    \left|\frac{\omega(r,z)}{r}\right|^2 \diff r\diff z
    =
    \left\|\frac{\omega}{r}
    \right\|_{L^2(\diff r \diff z)}^2.
    \end{equation}
    The space $L^{2,1}$ is just a slightly stronger version of $L^2$, so it is not surprising then that both \eqref{RemarkBoundA} and \eqref{RemarkBoundB} hold.
\end{remark}

\subsection{Definitions}

We now define the Sobolev space $H^s$, as well as the Lorentz space $L^{p,q}$.
\begin{definition}
    For all $s\geq 0$, we define $H^s\left(\mathbb{R}^4\right)$ as the Hilbert space with the norm
    \begin{equation}
    \|f\|_{H^s}^2
    =
    \int_{\mathbb{R}^4}
    \left(1+4\pi^2|\xi|^2\right)^s
    |\hat{f}(\xi)|^2 \diff \xi.
    \end{equation}
    We define $H^s_{df}\left(\mathbb{R}^4\right)\subset 
    H^s\left(\mathbb{R}^4;
    \mathbb{R}^4\right)$ as the space of divergence free vector fields, where
    \begin{equation}
    H^s_{df}\left(\mathbb{R}^4\right)
    =
    \left\{u\in H^s\left(\mathbb{R}^4;
    \mathbb{R}^4\right):
    \xi\cdot\hat{u}(\xi)=0\right\}.
    \end{equation}
    Note that $\xi\cdot\hat{u}(\xi)=0$ expresses the divergence free constraint $\nabla\cdot u=0$ in Fourier space.
\end{definition}

\begin{definition}
Let $(X,\mu)$ be a measure space.
For all $1\leq p,q<+\infty$, we define the Lorentz space $L^{p,q}(X,\mu)$ as the space with the quasinorm
\begin{equation}
\|f\|_{L^{p,q}}
=
p^\frac{1}{q}\left(\int_0^\infty
\tau^q \mu\left(\left\{x:|f(x)|>\tau
\right\}\right)^\frac{q}{p}
\frac{\diff\tau}{\tau}
\right)^\frac{1}{q},
\end{equation}
and for $q=+\infty$ we define $L^{p,\infty}(X,\mu)$ as the space with the quasinorm
\begin{equation}
\|f\|_{L^{p,\infty}}
=
\sup_{\tau>0}
\left(\tau \mu\left(\left\{x:|f(x)|>\tau
\right\}\right)^\frac{1}{p}\right).
\end{equation}
Note that for all $1\leq p<+\infty$, we have
\begin{equation}
    \|f\|_{L^{p,p}}
    =
    \|f\|_{L^p}
\end{equation}
\end{definition}

We will make use of two major results involving Lorentz spaces. First, we will use the fact that the dual of $L^{2,1}$ is $L^{2,\infty}$, which can be expressed as one case of the generalized H\"older inequality.

\begin{proposition} \label{HolderProp}
There exists a constant $C_H>0$ such that for all $f\in L^{2,1}(X,\mu)$ and for all $g\in L^{2,\infty}(X,\mu)$, 
\begin{equation}
\left|\int_X f(x)g(x) \diff\mu(x) \right|
\leq C_H
\|g\|_{L^{2,1}}\|f\|_{L^{2,\infty}}.
\end{equation}
\end{proposition}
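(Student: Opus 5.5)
The plan is a direct layer-cake argument, which avoids decreasing rearrangements entirely. Note first that the norms in the displayed inequality are evidently interchanged by a typo. Since $f\in L^{2,1}$ and $g\in L^{2,\infty}$, the intended bound is
\begin{equation*}
\left|\int_X f g \diff\mu\right|\leq C_H\|f\|_{L^{2,1}}\|g\|_{L^{2,\infty}},
\end{equation*}
and this is what I will prove, with $C_H=1$.

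\textbf{Step 1: the $L^{2,1}$ norm as a single integral.} For $q=1$, $p=2$ the definition reads
\begin{equation*}
\|f\|_{L^{2,1}}=2\int_0^\infty \mu(\{|f|>\tau\})^{\frac12}\diff\tau .
\end{equation*}
Write $E_\tau=\{x:|f(x)|>\tau\}$. The layer-cake formula gives $|f(x)|=\int_0^\infty \mathbf{1}_{E_\tau}(x)\diff\tau$. By Tonelli,
\begin{equation*}
\left|\int_X fg\diff\mu\right|\leq \int_X|f||g|\diff\mu=\int_0^\infty\int_{E_\tau}|g|\diff\mu\diff\tau .
\end{equation*}
So it suffices to show that, for every measurable set $E$ of finite measure,
\begin{equation*}
\int_E|g|\diff\mu\leq 2\|g\|_{L^{2,\infty}}\mu(E)^{\frac12}.
\end{equation*}

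\textbf{Step 2: the key lemma.} Apply layer cake to $|g|\mathbf{1}_E$:
\begin{equation*}
\int_E|g|\diff\mu=\int_0^\infty \mu(E\cap\{|g|>s\})\diff s.
\end{equation*}
The integrand is bounded by $\min\left(\mu(E),\|g\|_{L^{2,\infty}}^2s^{-2}\right)$, where the second bound is just the definition of the weak norm. Split the $s$-integral at $s_0=\|g\|_{L^{2,\infty}}\mu(E)^{-1/2}$:
\begin{itemize}
\item the range $s<s_0$ contributes at most $s_0\,\mu(E)=\|g\|_{L^{2,\infty}}\mu(E)^{1/2}$;
\item the range $s>s_0$ contributes at most $\|g\|_{L^{2,\infty}}^2/s_0=\|g\|_{L^{2,\infty}}\mu(E)^{1/2}$.
\end{itemize}
Adding the two gives the lemma with constant $2$.

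\textbf{Step 3: conclusion.} Insert the lemma with $E=E_\tau$ into Step 1:
\begin{equation*}
\int_X|fg|\diff\mu\leq 2\|g\|_{L^{2,\infty}}\int_0^\infty\mu(E_\tau)^{\frac12}\diff\tau=\|f\|_{L^{2,1}}\|g\|_{L^{2,\infty}}.
\end{equation*}

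\textbf{Obstacles and degenerate cases.} The only real content is the weak-type estimate of Step 2, and the choice of splitting point $s_0$ is the one place where care is needed. Measurability and the use of Tonelli are routine because all integrands are nonnegative. If $\mu(E_\tau)=\infty$ for some $\tau>0$, then $\mu(E_{\tau'})=\infty$ for all $\tau'<\tau$, so $\|f\|_{L^{2,1}}=\infty$ and there is nothing to prove. Otherwise every $E_\tau$ has finite measure, and the lemma applies for each $\tau>0$.
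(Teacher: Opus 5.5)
Your proof is correct, and you are right that the two norms in the displayed inequality are swapped. In the proof of \Cref{StretchingBoundThm} the proposition is applied with $g_{r,z}$ measured in $L^{2,\infty}$ and $\frac{\omega}{r^2}$ in $L^{2,1}$, which is exactly the version you prove.

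The paper gives no proof of its own. It simply invokes the classical duality between $L^{2,1}$ and $L^{2,\infty}$, whose textbook proof uses decreasing rearrangements: the Hardy--Littlewood inequality $\int_X|fg|\diff\mu\leq\int_0^\infty f^*(t)g^*(t)\diff t$, the bound $g^*(t)\leq t^{-1/2}\|g\|_{L^{2,\infty}}$, and the identity $\int_0^\infty t^{-1/2}f^*(t)\diff t=2\int_0^\infty\mu(\{|f|>\tau\})^{1/2}\diff\tau$.

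You instead decompose $|f|$ into its level-set indicators. For $q=1$ the quasinorm in the paper's distribution-function definition is exactly additive over that decomposition, so everything reduces to testing $g$ against a single set, $\int_E|g|\diff\mu\leq 2\|g\|_{L^{2,\infty}}\mu(E)^{1/2}$.

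What your route buys is a self-contained argument that never leaves the definition the paper actually states. It also yields the explicit constant $C_H=1$, which is the same constant the rearrangement route gives and is sharp: take $g(t)=t^{-1/2}$ and $f=\mathbf{1}_{(0,a)}$ on the half-line. What the rearrangement route buys is generality. It gives the H\"older inequality $\int|fg|\lesssim\|f\|_{L^{p,q}}\|g\|_{L^{p',q'}}$ for $1<q<\infty$, where the quasinorm is no longer a linear superposition over level sets and your reduction to indicators is unavailable. (Your argument does extend verbatim to $L^{p,1}\times L^{p',\infty}$.)

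Two small points to tidy up in a written version:
\begin{itemize}
\item Tonelli on an abstract $(X,\mu)$ needs $\sigma$-finiteness. You get it by restricting to $\{|f|>0\}=\bigcup_n E_{1/n}$, which is a countable union of finite-measure sets once $\|f\|_{L^{2,1}}<\infty$.
\item In Step 2, the cases $\mu(E)=0$ and $\|g\|_{L^{2,\infty}}=0$ make $s_0$ degenerate to $\infty$ or $0$. They are trivial but should be set aside before splitting the integral.
\end{itemize}
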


The second result we will use is that the intersection of the Lebesgue spaces $L^p\cap L^r$ includes all of the Lorentz spaces in between.

\begin{proposition} \label{IntersectionProp}
    For all $1\leq p<q<r\leq +\infty$, we have the inclusion
    \begin{equation}
    L^{q,1} \subset L^p \cap L^r.
    \end{equation}
\end{proposition}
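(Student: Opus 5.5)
The plan is to work directly with the distribution function $\lambda_f(\tau)=\mu(\{x:|f(x)|>\tau\})$ and to decompose $f$ dyadically by level sets. For $k\in\mathbb{Z}$ set $E_k=\{x:2^k<|f(x)|\leq 2^{k+1}\}$ and $\mu_k=\mu(E_k)$. Splitting the $\tau$-integral in the definition of the $L^{q,1}$ quasinorm over $[2^k,2^{k+1}]$ and using that $\lambda_f$ is nonincreasing gives the standard two-sided comparison
\begin{equation*}
\|f\|_{L^{q,1}}\approx \sum_{k\in\mathbb{Z}}2^k\lambda_f(2^k)^{\frac{1}{q}} .
\end{equation*}
In the same way, for any $1\leq s<\infty$,
\begin{equation*}
\|f\|_{L^s}^s\approx\sum_{k\in\mathbb{Z}}2^{ks}\mu_k .
\end{equation*}
The target is to bound the $L^p$ and $L^r$ sums by the $L^{q,1}$ sum. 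I would use $\mu_k\leq\lambda_f(2^k)$ and then split the sum over $k$ at the level where $\lambda_f(2^k)$ crosses $1$.

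For the $L^r$ bound, consider first the levels with $\lambda_f(2^k)\leq 1$. There $\lambda_f(2^k)^{1/r}\leq\lambda_f(2^k)^{1/q}$, since $r>q$. Hence $\big(2^{kr}\mu_k\big)^{1/r}\leq 2^k\lambda_f(2^k)^{1/q}$, and summing via $\ell^1\subset\ell^r$ controls this part of the $L^r$ sum. The case $r=\infty$ is the same computation with $\sup_k 2^k$ over the $k$ with $\mu_k>0$. For the $L^p$ bound the complementary levels are the favourable ones. When $\lambda_f(2^k)\geq 1$ we have $\mu_k^{1/p}$ comparable to or below $\lambda_f(2^k)^{1/q}$ only if $\mu_k$ stays bounded. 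So the $L^p$ sum over these levels is controlled by the $L^{q,1}$ sum once one knows that the large-level-set part of $f$ has a uniform bound on its measure.

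The main obstacle is the two unfavourable regimes. For $L^r$ these are the low levels with $\lambda_f(2^k)>1$; for $L^p$ they are the high levels with $\lambda_f(2^k)<1$, where $p<q$ makes $\lambda^{1/p}\ll\lambda^{1/q}$ the wrong way round. The $L^{q,1}$ norm alone does not control these. Indeed, on $\mathbb{R}^n$ the functions $|x|^{-a}\mathbf{1}_{|x|\leq 1}$ with $n/r<a<n/q$ lie in $L^{q,1}$ but not in $L^r$, and $\min(1,|x|^{-a})$ with $n/q<a<n/p$ lies in $L^{q,1}$ but not in $L^p$. So the inclusion cannot be proved from the dyadic bounds without extra information in these regimes.

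I would therefore first try to close the argument by keeping track of exactly what additional control on $f$ makes these two tails summable. This means bounded support measure to handle the low levels, and a bounded or $L^r$-type tail to handle the high levels. I expect this analysis to show that the statement only holds in the form where $f$ is already known to lie in $L^p\cap L^r$, which is the form in which it is used later in the paper. In that form the argument runs in the reverse direction. One splits the sum at $k_0$ chosen so that $2^{k_0}$ balances $\|f\|_{L^p}$ against $\|f\|_{L^r}$. One then bounds the low levels by Chebyshev with the $L^p$ norm and the high levels by Chebyshev with the $L^r$ norm. Both resulting sums are geometric because $p<q<r$.
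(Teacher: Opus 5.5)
Your conclusion is right: as displayed, the inclusion $L^{q,1}\subset L^p\cap L^r$ is false, and both of your examples are valid counterexamples.
\begin{itemize}
\item For $f=|x|^{-a}\mathbf{1}_{|x|\leq1}$ one has $\lambda_f(\tau)=c_n\tau^{-n/a}$ for $\tau\geq1$. So $\int_0^\infty\lambda_f^{1/q}\diff\tau<\infty$ iff $a<n/q$, while $f\in L^r$ iff $a<n/r$.
\item For $\min(1,|x|^{-a})$ one has $\lambda_f(\tau)=c_n\tau^{-n/a}$ for $\tau<1$. The conditions become $a>n/q$ versus $a>n/p$.
\end{itemize}

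The paper gives no proof of this proposition; it is quoted as a standard fact. Its only application, in the proof of Proposition~\ref{LorentzBoundProp}, uses the reverse inclusion $L^{7/4}\cap L^{17/8}\subset L^{2,1}$. Your final Chebyshev-splitting argument proves exactly that reverse inclusion, and it is correct.

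You can drop the dyadic bookkeeping entirely. Chebyshev gives $\lambda_f(\tau)\leq\min\bigl(\tau^{-p}\|f\|_{L^p}^p,\tau^{-r}\|f\|_{L^r}^r\bigr)$, and hence, for any $T>0$,
\begin{equation*}
\|f\|_{L^{q,1}}=q\int_0^\infty\lambda_f(\tau)^{\frac{1}{q}}\diff\tau
\leq q\|f\|_{L^p}^{\frac{p}{q}}\int_0^T\tau^{-\frac{p}{q}}\diff\tau
+q\|f\|_{L^r}^{\frac{r}{q}}\int_T^\infty\tau^{-\frac{r}{q}}\diff\tau<\infty,
\end{equation*}
because $p/q<1<r/q$. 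For $r=\infty$, take $T=\|f\|_{L^\infty}$; the second term vanishes since $\lambda_f(\tau)=0$ for $\tau\geq\|f\|_{L^\infty}$. Optimizing in $T$ gives $\|f\|_{L^{q,1}}\leq C\|f\|_{L^p}^{\theta}\|f\|_{L^r}^{1-\theta}$ with $\frac{1}{q}=\frac{\theta}{p}+\frac{1-\theta}{r}$. In your write-up, state this corrected proposition explicitly. Do not say the statement ``only holds in the form where $f$ is already known to lie in $L^p\cap L^r$''.

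What needs fixing is the exploratory middle part, where the favourable and unfavourable regimes are swapped.
\begin{itemize}
\item For $0<\lambda\leq1$ and $r>q$ one has $\lambda^{1/r}\geq\lambda^{1/q}$, not $\leq$. So the comparison $(2^{kr}\mu_k)^{1/r}\leq2^k\lambda_f(2^k)^{1/q}$ holds on the levels with $\lambda_f(2^k)\geq1$, not $\leq1$.
\item For $L^p$ the harmless levels are those with $\lambda_f(2^k)\leq1$, where $\lambda^{1/p}\leq\lambda^{1/q}$ is the right way round. The harmful ones are those with $\lambda_f(2^k)>1$.
\end{itemize}
So the obstruction to $L^r$ is the high levels of small measure: a local singularity, as in your first example, whose support has bounded measure. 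The obstruction to $L^p$ is the low levels of large measure: slow decay, as in your second example. Your counterexamples fit this corrected assignment, not the one you wrote. None of this affects the final argument, which uses only Chebyshev, but as written those paragraphs contain false inequalities and should be corrected or cut.
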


\begin{remark}
    The main measure we will consider in this paper is $r^2\diff r \diff z$, with
    \begin{equation}
    \mu(\Omega)=\int_{\Omega} r^2 \diff r \diff z
    \end{equation}
    We will note that this is equivalent up to a factor of $4\pi$ to the Lebesgue measure on $\mathbb{R}^4$, because for axisymmetric functions in cylindrical coordinates in four dimensions we have
    $\diff x=4\pi r^2\diff r\diff z$.
    For this reason we will treat quasinorms $L^{2,1}\left(r^2\diff r\diff z\right)$ and $L^{2,1}\left(\mathbb{R}^4\right)$ as interchangeable given that
    \begin{equation}
    \left\|\frac{\omega}{r^2}
    \right\|_{L^{2,1}
    \left(\mathbb{R}^4\right)}
    =
    \sqrt{4\pi}\left\|\frac{\omega}{r^2}
    \right\|_{L^{2,1}
    \left(r^2\diff r\diff z\right)}.
    \end{equation}
\end{remark}

We also have the following classical local wellposedness theorem for strong solutions of the Euler equation.

\begin{theorem} \label{LocalWpThm}
    For all $u^0\in H^s_{df}\left(\mathbb{R}^d\right)$ with $d\geq 2, s>1+\frac{d}{2}$, there exists a unique, strong solution of the Euler equation
    $u\in C\left([0,T_{max}); 
    H^s_{df}\left(\mathbb{R}^d\right)\right)$, where
    \begin{equation}
    T_{max}\geq 
    \frac{C_{s,d}}{\left\|u^0\right\|_{H^s}},
    \end{equation}
    and $C_{s,d}>0$ is an absolute constant independent of $u^0$ depending only on $s$ and $d$.
    Furthermore, if $T_{max}<+\infty$, then
    \begin{equation} \label{BKM}
    \int_0^{T_{max}} 
    \|A(\cdot,t)\|_{L^\infty} \diff t
    =+\infty.
    \end{equation}
    Note that for axisymmetric, swirl-free solutions, this regularity criterion can be expressed as
    \begin{equation}
    \int_0^{T_{max}} 
    \|\omega(\cdot,t)\|_{L^\infty} \diff t
    =+\infty.
    \end{equation}
\end{theorem}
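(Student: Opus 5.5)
The plan is to follow the classical Kato/Beale–Kato–Majda scheme, working directly with the projected form $\partial_t u+\mathbb{P}_{df}(u\cdot\nabla)u=0$. For existence I would regularize, either by mollifying the nonlinearity ($\partial_t u^\epsilon+\mathbb{P}_{df}J_\epsilon((J_\epsilon u^\epsilon\cdot\nabla)J_\epsilon u^\epsilon)=0$ with $J_\epsilon$ a Friedrichs mollifier) or by a Fourier truncation. Each regularized problem is an ODE on $H^s_{df}$, so Picard's theorem gives local solutions.

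\textbf{A priori estimate.} Apply $\Lambda^s=(1-\Delta)^{s/2}$ and pair with $\Lambda^s u$ in $L^2$. The projection drops out because $\Lambda^s u$ is divergence free and $\mathbb{P}_{df}$ commutes with $\Lambda^s$. The term $\langle (u\cdot\nabla)\Lambda^s u,\Lambda^s u\rangle$ vanishes by incompressibility. The Kato–Ponce commutator estimate then bounds the remainder:
\begin{equation}
\frac{\diff}{\diff t}\|u\|_{H^s}\leq C_{s,d}\|\nabla u\|_{L^\infty}\|u\|_{H^s}.
\end{equation}
These estimates are uniform in $\epsilon$. Since $s>1+\frac{d}{2}$, the embedding $H^{s-1}\hookrightarrow L^\infty$ gives $\|\nabla u\|_{L^\infty}\leq C\|u\|_{H^s}$. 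The resulting Riccati inequality $\frac{\diff}{\diff t}\|u\|_{H^s}\leq C\|u\|_{H^s}^2$ yields a uniform bound on $[0,T]$ with $T= C_{s,d}/\|u^0\|_{H^s}$.

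\textbf{Passage to the limit and uniqueness.} I would show the approximations are Cauchy in $C([0,T];L^2)$ by an $L^2$ difference estimate controlled by $\|\nabla u\|_{L^\infty}$. Interpolating with the uniform $H^s$ bound gives convergence in $H^{s'}$ for every $s'<s$. Continuity into $H^s$ itself needs the standard extra argument: weak continuity plus continuity of the norm, via a Bona–Smith type approximation of the data. Uniqueness follows from the same $L^2$ difference estimate and Gronwall. The pressure term drops out because the difference of two solutions is divergence free.

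\textbf{Continuation criterion \eqref{BKM}.} This is the main obstacle. The velocity gradient is recovered from $A$ through Calderón–Zygmund operators, since $\Delta u_i=2\sum_j\partial_jA_{ji}$ when $\nabla\cdot u=0$. Because these operators are unbounded on $L^\infty$, I would use the logarithmic Sobolev/Brezis–Gallouet type bound
\begin{equation}
\|\nabla u\|_{L^\infty}\leq C\left(1+\|u\|_{L^2}+\|A\|_{L^\infty}\left(1+\log^+\|u\|_{H^s}\right)\right).
\end{equation}
The proof splits $\nabla u$ into low, intermediate and high frequencies in Littlewood–Paley pieces:
\begin{itemize}
\item low frequencies are bounded by $\|u\|_{L^2}$;
\item each intermediate dyadic block of $\nabla u$ is bounded by $C\|A\|_{L^\infty}$, and there are $O(\log\|u\|_{H^s})$ such blocks;
\item the high-frequency tail is bounded by $\|u\|_{H^s}$ times a small factor, using $s-1>\frac{d}{2}$.
\end{itemize}
Energy conservation $\|u(t)\|_{L^2}=\|u^0\|_{L^2}$ holds for these strong solutions. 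Inserting the bound into the $H^s$ estimate gives $\frac{\diff}{\diff t}\log\|u\|_{H^s}\leq C(1+\|A\|_{L^\infty})(1+\log\|u\|_{H^s})$. Gronwall then gives a double-exponential bound on $\|u\|_{H^s}$ in terms of $\int_0^T\|A\|_{L^\infty}$. So if this integral were finite at $T_{max}<\infty$, the $H^s$ norm would remain bounded, and the local existence time estimate would extend the solution past $T_{max}$, which is a contradiction.

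\textbf{Axisymmetric reformulation.} From $A=\frac{1}{2}\omega(e_r\otimes e_z-e_z\otimes e_r)$ with $e_r\perp e_z$ unit vectors, $A$ has pointwise norm comparable to $|\omega|$ (Frobenius norm $|\omega|/\sqrt{2}$). Hence $\|A\|_{L^\infty}\approx\|\omega\|_{L^\infty}$, and the two criteria are equivalent.
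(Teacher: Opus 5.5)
Your outline is correct and is essentially the classical argument that the paper does not reprove but only cites: Kato for local well-posedness with the lifespan bound, and Beale--Kato--Majda and Kato--Ponce for the continuation criterion. Your Littlewood--Paley derivation of the logarithmic inequality is just the frequency-space version of the near/intermediate/far splitting of the Biot--Savart kernel in the Beale--Kato--Majda argument, and your final step, $|A|\simeq|\omega|$ from $A=\frac{1}{2}\omega(e_r\otimes e_z-e_z\otimes e_r)$, is exactly the observation the paper uses to restate the criterion in terms of $\omega$.
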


\begin{remark}
    The local wellposedness result was proven by Kato in \cite{KatoLocalWP} and the Beale-Kato-Majda regularity criterion \eqref{BKM} was proven by Beale, Kato, and Majda for $d=3$ in \cite{BKM} and by Kato and Ponce for $d\geq 4$ in \cite{KatoPonce}.
    Note that it is common to denote the anti-symmetric part of $\nabla u$ by $\omega$ when $d\geq 4$, and the vorticity cannot be expressed by a vector. We use $A$ instead, because in our context this makes it clearer when we express the relationship between $A$ and the scalar vorticity $\omega(r,z)=\partial_r u_z-\partial_zu_r$.
\end{remark}

\section{Bound on vortex stretching}

In this section, we will prove the bound on the vortex stretching term in \eqref{StretchingBoundIntro}, which is the crux of the global regularity argument.
The proof is broken down into a number of propositions and lemmas for ease of reading.

\begin{proposition} \label{RadVeloProp}
    Suppose $u\in H^s\left(\mathbb{R}^4\right), s>3$ is axisymmetric and swirl-free. 
    Then for all $r\geq 0, z\in\mathbb{R}$,
    \begin{equation}
    u_r(r,z)
    =
    \frac{1}{\pi}
    \int_{-\infty}^\infty \int_0^\infty 
    \frac{\rb^2 (\zb-z)}
    {\left(r^2+\rb^2+(\zb-z)^2\right)^2}
    H\left(\frac{2r\rb}
    {r^2+\rb^2+(\zb-z)^2}\right)
    \omega(\rb,\zb)  
    \diff\rb \diff \zb,
    \end{equation}
    where for all $0\leq \eta<1$,
    \begin{equation}
    H(\eta)=\int_{-1}^1
    \frac{\tau}{(1-\eta\tau)^2} \diff\tau
    \end{equation}
\end{proposition}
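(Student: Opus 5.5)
This is a direct specialization of the general radial Biot–Savart formula \eqref{RadVelo} to $d=4$, followed by an algebraic rewriting of the denominator. We take \eqref{RadVelo}, derived in \cite{MillerTsai} for $u\in H^s$ axisymmetric and swirl-free, as given; the regularity $s>3=1+\frac{d}{2}$ makes $u$ and $\omega$ continuous and decaying, so the integrals converge.

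\emph{Specializing to $d=4$.} Setting $d=4$ gives prefactor $\frac{d-2}{\pi}=\frac{2}{\pi}$, weight $\rb^{d-2}=\rb^2$, and $\left(1-\tau^2\right)^{\frac{d-4}{2}}=1$. Hence
\begin{equation*}
u_r(r,z)=\frac{2}{\pi}\int_{-\infty}^\infty\int_0^\infty \rb^2(\zb-z)\,\omega(\rb,\zb)\int_{-1}^1\frac{\tau}{\left(r^2+\rb^2-2r\rb\tau+(z-\zb)^2\right)^2}\diff\tau\diff\rb\diff\zb .
\end{equation*}

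\emph{Rewriting the inner integral.} Write $D=r^2+\rb^2+(\zb-z)^2$ and $\eta=\frac{2r\rb}{D}$. Then the denominator factors as $\left(D-2r\rb\tau\right)^2=D^2(1-\eta\tau)^2$, so
\begin{equation*}
\int_{-1}^1\frac{\tau}{(D-2r\rb\tau)^2}\diff\tau=\frac{1}{D^2}H(\eta).
\end{equation*}
Since $2r\rb\le r^2+\rb^2$, we have $0\le\eta\le 1$, with $\eta=1$ exactly when $r=\rb$ and $z=\zb$. That set has measure zero in the $(\rb,\zb)$ integration, so $H$ only needs to be evaluated on $[0,1)$, where it is defined. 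Near $\eta=1$, the singularity of $H$ is integrable; this is where the original formula's integrability enters.

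\emph{Checking the constant.} Substituting gives the claimed integrand with prefactor $\frac{2}{\pi}$, whereas the statement has $\frac{1}{\pi}$. This is the step I expect to be the main obstacle: it is pure constant bookkeeping, but the factor of 2 has to be accounted for. I would recheck the normalization of \eqref{RadVelo} in \cite{MillerTsai}, in particular whether a factor $\frac12$ from the $\tau$-parametrization of the sphere $S^{d-2}$ is absorbed there. I would then test the resulting formula on a simple example, for instance the leading behavior of $u_r$ at large distance from a localized vortex ring, and adopt whichever normalization is consistent, so that the proposition follows from \eqref{RadVelo} by the substitution above. Beyond this constant, the proof is an immediate substitution.
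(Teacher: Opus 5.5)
Your route is the paper's. Its entire proof is the substitution $d=4$ in \eqref{RadVelo} plus the factorization $\left(D-2r\rb\tau\right)^2=D^2(1-\eta\tau)^2$, and you carry both out correctly, including the remark that $\eta=1$ only on a null set. Your observation about the constant is also right, and the paper's one-line proof passes over it: \eqref{RadVelo} as printed has prefactor $\frac{d-2}{\pi}=\frac{2}{\pi}$ at $d=4$. The gap is that you leave this unresolved. ``Adopt whichever normalization is consistent'' is not an argument, so what your proposal actually derives from \eqref{RadVelo} is the identity with $\frac{2}{\pi}$, which contradicts the statement. Taking \eqref{RadVelo} as given cannot close this; you have to compute the constant yourself.

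The computation is short. Since $\nabla\cdot u=0$, we have $\Delta u_j=\partial_i(\partial_iu_j-\partial_ju_i)=2\partial_iA_{ij}$. The Newtonian potential on $\mathbb{R}^4$ satisfies $\nabla G(x)=\frac{x}{|S^3|\,|x|^4}=\frac{x}{2\pi^2|x|^4}$, so
\begin{equation*}
u_j(x)=\frac{1}{\pi^2}\int_{\mathbb{R}^4}\frac{(x-y)_i\,A_{ij}(y)}{|x-y|^4}\diff y .
\end{equation*}
Take $x=(r,0,0,z)$ with $r>0$, so that $u_r(r,z)=u_1(x)$, and write $y=(\rb\theta,\zb)$ with $\theta\in S^2$. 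From $A=\frac12\omega\left(e_r\otimes e_z-e_z\otimes e_r\right)$ you get $(x-y)_iA_{i1}(y)=\frac12(\zb-z)\,\theta_1\,\omega(\rb,\zb)$, and $|x-y|^2=r^2+\rb^2-2r\rb\theta_1+(z-\zb)^2$. Use $\diff y=\rb^2\diff\sigma(\theta)\diff\rb\diff\zb$ and Archimedes' formula $\int_{S^2}f(\theta_1)\diff\sigma(\theta)=2\pi\int_{-1}^1f(\tau)\diff\tau$. The prefactor is then $\frac{1}{2\pi^2}\cdot 2\pi=\frac{1}{\pi}$, exactly as in the proposition.

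The same computation in dimension $d$ gives $\frac{|S^{d-3}|}{|S^{d-1}|}=\frac{d-2}{2\pi}$. So it is \eqref{RadVelo} as printed that carries the spurious factor $2$, not the statement. An independent check is $d=3$: the substitution $\tau=\cos\phi$ turns $\frac{1}{2\pi}\int_{-1}^1F(\tau)\frac{\diff\tau}{\sqrt{1-\tau^2}}$ into $\frac{1}{4\pi}\int_0^{2\pi}F(\cos\phi)\diff\phi$, which is the classical Biot--Savart normalization.

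With this in place, your rewriting of the $\tau$-integral as $D^{-2}H(\eta)$ finishes the proof. At $r=0$ both sides vanish, since $u_r=0$ on the axis by symmetry and $H(0)=0$. The factor $2$ is immaterial in the rest of the paper, where only absolute constants matter.
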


\begin{proof}
    This follows immediately from \eqref{RadVelo} by pulling a factor of $\frac{1}{\left(r^2+\rb^2+(\zb-z)^2\right)^2}$ from the integral with respect to $\tau$.
\end{proof}

\begin{proposition} \label{HBoundProp}
    For all $0\leq \eta<1, H(\eta)\geq 0$ and 
    \begin{equation}
    H(\eta)\leq \frac{4\eta}{1-\eta}.
    \end{equation}
\end{proposition}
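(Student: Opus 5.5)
We plan to exploit the oddness of $\tau$ by symmetrizing the integral defining $H$, which gives both non-negativity and the upper bound at once. Pairing $\tau$ with $-\tau$ on $[0,1]$, we write
\begin{equation}
H(\eta)=\int_0^1 \tau\left(\frac{1}{(1-\eta\tau)^2}-\frac{1}{(1+\eta\tau)^2}\right)\diff\tau
=\int_0^1 \frac{4\eta\tau^2}{(1-\eta^2\tau^2)^2}\diff\tau .
\end{equation}
The second equality uses $(1+a)^2-(1-a)^2=4a$ with $a=\eta\tau$, together with $(1-a)(1+a)=1-a^2$.

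For $0\leq\eta<1$ and $\tau\in[0,1]$ the integrand is non-negative, so $H(\eta)\geq 0$ follows immediately.

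For the upper bound, we factor the denominator as $(1-\eta\tau)^2(1+\eta\tau)^2$. Using $(1+\eta\tau)^2\geq 1$ and $\tau^2\leq 1$, we get
\begin{equation}
H(\eta)\leq 4\eta\int_0^1 \frac{\diff\tau}{(1-\eta\tau)^2}.
\end{equation}
This integral is elementary:
\begin{equation}
\int_0^1 (1-\eta\tau)^{-2}\diff\tau=\frac{1}{\eta}\left(\frac{1}{1-\eta}-1\right)=\frac{1}{1-\eta},
\end{equation}
which holds for $\eta>0$. The case $\eta=0$ is trivial, since then $H(0)=0$. Combining these gives $H(\eta)\leq \frac{4\eta}{1-\eta}$.

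There is no real obstacle here. The only point needing care is the algebraic simplification of the symmetrized integrand, and keeping the bound in the form $\frac{4\eta}{1-\eta}$ rather than something cruder. If desired, one could instead compute $H$ in closed form using logarithms, but the symmetrization argument above is cleaner.
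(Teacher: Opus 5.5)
Your proof is correct and is essentially the paper's argument. The paper also symmetrizes $\tau\mapsto-\tau$ to get the nonnegative integrand $\frac{4\eta\tau^2}{(1-\eta\tau)^2(1+\eta\tau)^2}$, then bounds $\tau^2(1+\eta\tau)^{-2}\le 1$ and integrates $\frac{4\eta}{(1-\eta\tau)^2}$ exactly to reach $\frac{4\eta}{1-\eta}$.
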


\begin{proof}
    We begin by making the change of variables $\tau \to -\tau$ when $-1<\tau<0$, which yields
    \begin{align}
    H(\eta)
    &=
    \int_0^1
    \left(\frac{\tau}{(1-\eta\tau)^2}
    -
    \frac{\tau}{(1+\eta\tau)^2}\right)
    \diff \tau \\
    &=
    \int_0^1
    \frac{4\eta\tau^2}
    {(1-\eta\tau)^2(1+\eta\tau)^2}
    \diff\tau.
    \end{align}
    Non-negativity is then obvious, and furthermore
    \begin{align}
    H(\eta)
    &\leq 
    4 \int_0^1 \frac{\eta}{(1-\eta\tau)^2} 
    \diff\tau \\
    &=
    4\left(\frac{1}{1-\eta}-1\right) \\
    &=
    \frac{4\eta}{1-\eta}.
    \end{align}
\end{proof}

\begin{proposition} \label{VeloBoundProp}
    Suppose $u\in H^s\left(\mathbb{R}^4\right), s>4$ is axisymmetric and swirl-free. 
    Then for all $r\geq 0, z\in\mathbb{R}$,
    \begin{equation}
    |u_r(r,z)|
    \leq 
    \frac{8r}{\pi}
    \int_{-\infty}^\infty \int_0^\infty 
    \frac{1}{\left(r^2+\rb^2
    +(\zb-z)^2\right)^\frac{1}{2}
    \left((\rb-r)^2+(\zb-z)^2\right)^\frac{1}{2}}
    |\omega(\rb,\zb)| 
    \diff\rb \diff \zb.
    \end{equation}
\end{proposition}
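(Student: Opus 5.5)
Start from \Cref{RadVeloProp} and take absolute values inside the integral:
\begin{equation}
|u_r(r,z)|\leq \frac{1}{\pi}\int_{-\infty}^\infty\int_0^\infty \frac{\rb^2|\zb-z|}{\rho^4}H(\eta)|\omega(\rb,\zb)|\diff\rb\diff\zb,
\end{equation}
where we write $\rho^2=r^2+\rb^2+(\zb-z)^2$ and $\eta=\frac{2r\rb}{\rho^2}$. \Cref{HBoundProp} gives $H\geq 0$, so no cancellation is lost, and it also gives $H(\eta)\leq \frac{4\eta}{1-\eta}$. Since
\begin{equation}
1-\eta=\frac{\rho^2-2r\rb}{\rho^2}=\frac{(\rb-r)^2+(\zb-z)^2}{\rho^2},
\end{equation}
we get
\begin{equation}
\frac{4\eta}{1-\eta}=\frac{8r\rb}{(\rb-r)^2+(\zb-z)^2}.
\end{equation}
Substituting, the kernel is bounded by
\begin{equation}
\frac{8r}{\pi}\,\frac{\rb^3|\zb-z|}{\rho^4\left((\rb-r)^2+(\zb-z)^2\right)}.
\end{equation}

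It remains to show that this is at most $\frac{8r}{\pi}\rho^{-1}\left((\rb-r)^2+(\zb-z)^2\right)^{-1/2}$. Set $\delta=\left((\rb-r)^2+(\zb-z)^2\right)^{1/2}$. The needed inequality is
\begin{equation}
\rb^3|\zb-z|\leq \rho^3\delta.
\end{equation}
This follows from two elementary facts: $|\zb-z|\leq\delta$, and $\rb\leq\rho$, hence $\rb^3\leq\rho^3$. Multiplying the two bounds gives the claimed pointwise kernel estimate, and integrating against $|\omega|$ yields the proposition.

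The main (mild) obstacle is simply noticing that the crude bound $H\lesssim \eta/(1-\eta)$ converts the singular factor $1/(1-\eta)$ into exactly the distance $\delta^{-2}$ to the point $(r,z)$. After that, the surplus powers of $\rb$ and $|\zb-z|$ in the numerator are absorbed by $\rho^3\delta$, leaving the locally two-dimensional kernel $\delta^{-1}$. The regularity $s>4$ is only needed to justify the Biot--Savart representation and the finiteness of the integrals. The case $r=0$ is trivial, since both sides vanish ($H(0)=0$).
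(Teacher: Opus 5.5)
Your proof is correct and is essentially the paper's argument: insert $0\le H(\eta)\le \frac{4\eta}{1-\eta}$ from \Cref{HBoundProp} into \Cref{RadVeloProp}, use $\rho^2-2r\rb=(\rb-r)^2+(\zb-z)^2$, and bound the resulting kernel. The only difference is that you write out the final elementary inequality $\rb^3|\zb-z|\leq \rho^3\delta$ (via $\rb\leq\rho$ and $|\zb-z|\leq\delta$), which the paper leaves to the reader.
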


\begin{proof}
    Applying \Cref{HBoundProp} to \Cref{RadVeloProp}, we find that
    \begin{multline}
    |u(r,z)| 
    \leq
    \frac{1}{\pi}
    \int_{-\infty}^\infty \int_0^\infty 
    \left(\frac{\rb^2 |\zb-z|}
    {\left(r^2+\rb^2+(\zb-z)^2\right)^2}\right)
    \left(\frac{8r\rb}
    {r^2+\rb^2+(\zb-z)^2}\right) \\
    \left(\frac{1}{1-\frac{2r\rb}
    {r^2+\rb^2+(\zb-z)^2}}\right)
    |\omega(\rb,\zb)| 
    \diff\rb \diff \zb.
    \end{multline}
    Simplifying, we may compute that
    \begin{align}
    |u_r(r,z)|
    &\leq 
    \frac{8r}{\pi}
    \int_{-\infty}^\infty \int_0^\infty 
    \left(\frac{\rb^3 |\zb-z|}
    {\left(r^2+\rb^2+(\zb-z)^2\right)^2
    \left(r^2+\rb^2+(\zb-z)^2-2r\rb\right)}\right)
    |\omega(\rb,\zb)| 
    \diff\rb \diff \zb \\
    &\leq 
    \frac{8r}{\pi}
    \int_{-\infty}^\infty \int_0^\infty 
    \frac{1}{\left(r^2+\rb^2
    +(\zb-z)^2\right)^\frac{1}{2}
    \left((\rb-r)^2+(\zb-z)^2\right)^\frac{1}{2}}
    |\omega(\rb,\zb)| 
    \diff\rb \diff \zb.
    \end{align}
\end{proof}

\begin{lemma} \label{gLemma}
    For all $a>0, b\in \mathbb{R}$, let
    \begin{equation}
    g_{a,b}(r,z)
    =
    \frac{1}{(a^2+r^2+(z-b)^2)^\frac{1}{2}
    ((r-a)^2+(z-b)^2)^\frac{1}{2}}.
    \end{equation}
    Then for all $a>0, b\in \mathbb{R}$,
    \begin{equation}
    \|g_{a,b}\|_{L^{2,\infty}(r^2\diff r\diff z)}
    \leq
    \sqrt{8 \pi}. 
    \end{equation}
\end{lemma}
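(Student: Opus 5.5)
We need to bound $\mu(\{g_{a,b}>\tau\})\le 8\pi/\tau^2$ for every $\tau>0$, where $\mu=r^2\,\diff r\,\diff z$ on the half plane $r\ge 0$. The definition of the $L^{2,\infty}$ quasinorm then gives
\begin{equation*}
\|g_{a,b}\|_{L^{2,\infty}}=\sup_{\tau>0}\tau\,\mu(\{g_{a,b}>\tau\})^{1/2}\le\sqrt{8\pi}.
\end{equation*}
The plan is to write the point $(r,z)$ in polar coordinates centred at $(a,b)$,
\begin{equation*}
r-a=\rho\cos\theta,\qquad z-b=\rho\sin\theta .
\end{equation*}
The second factor of $g_{a,b}$ is then simply $\rho$. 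For the first factor we have $a^2+r^2+(z-b)^2\ge r^2+(z-b)^2$, and $a^2+r^2\ge \tfrac12(r-a)^2+\ldots$. The cleanest choice is the estimate $a^2+r^2\ge \max(a^2,r^2)$, which yields the two lower bounds
\begin{equation*}
a^2+r^2+(z-b)^2\ge \max\bigl(r^2,\;a^2+(z-b)^2\bigr),
\end{equation*}
and this puts the superlevel set inside a region where we can compute the weighted area directly.

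Concretely, $g_{a,b}>\tau$ forces both $\rho\,(a^2+r^2+(z-b)^2)^{1/2}<1/\tau$ and
\begin{equation*}
r^2\rho^2\le \rho^2\bigl(a^2+r^2+(z-b)^2\bigr)<\tau^{-2}.
\end{equation*}
So the set is contained in $\{r\rho<1/\tau\}$. The weight $r^2$ is therefore controlled by $1/(\tau\rho)^2$, which gives
\begin{equation*}
\mu(\{g>\tau\})\le\int_{\{g>\tau\}} \frac{1}{\tau^2\rho^2}\,\rho\,\diff\rho\,\diff\theta .
\end{equation*}
This integral is logarithmically divergent in $\rho$ both at $0$ and at $\infty$, so the constraint $r\rho<1/\tau$ alone does not suffice, and we must also use that the region is bounded in $\rho$ from both sides.

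The way out is to split the integrand as $\min\bigl(r^2,\;1/(\tau^2\rho^2)\bigr)$ and use $r\le a+\rho$, together with $\rho\cdot(a^2+\ldots)^{1/2}\ge\rho\max(a,\rho/\sqrt2)$ (using $r^2+(z-b)^2\ge\rho^2/2$-type bounds after comparing with $a$). This gives three regimes.
\begin{itemize}
\item When $\rho\le a$, the condition $g>\tau$ forces $\rho<1/(a\tau)$. In this regime $r\le 2a$, so the contribution is at most
\begin{equation*}
\int_0^{1/(a\tau)}4a^2\cdot2\pi\rho\,\diff\rho=\frac{4\pi}{\tau^2},
\end{equation*}
with the angular factor refined to $\pi$ since $r\ge0$ only covers part of the circle.
\item When $\rho\ge a$, we have $a^2+r^2+(z-b)^2\gtrsim\rho^2$, so $\rho\lesssim\tau^{-1/2}$, and we use $r\le 2\rho$.
\item In the second regime this gives a contribution of order $\int_a^{c\tau^{-1/2}}\rho^3\,\diff\rho\lesssim\tau^{-2}$.
\end{itemize}
Adding the two contributions gives a bound $C/\tau^2$. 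Tuning the inequalities (for instance using $a^2+r^2+(z-b)^2\ge \tfrac12\rho^2$, which follows from $a^2+r^2\ge\tfrac12(r-a)^2$) should produce the stated constant $8\pi$: the two pieces each contribute roughly $4\pi/\tau^2$ or less.

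The main obstacle is exactly this bookkeeping, namely choosing the lower bound on $a^2+r^2+(z-b)^2$ in each regime so that the contributions of $\rho\le a$ and $\rho\ge a$ are each explicitly $\le 4\pi/\tau^2$. I would first try the single uniform bound
\begin{equation*}
a^2+r^2+(z-b)^2\ge\max\bigl(a^2,\tfrac12\rho^2\bigr)
\end{equation*}
together with $r\le a+\rho$, and only if the constant comes out too large refine it to $a^2+\rho^2/2$ via $a^2+r^2\ge\tfrac12(r-a)^2+\tfrac12(r+a)^2\cdot 0$. Since only some absolute constant matters downstream, through \Cref{HolderProp}, a slightly worse constant would not affect the argument.
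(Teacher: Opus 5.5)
Your argument is correct, and the inequalities you name give exactly the stated constant, but it is organized differently from the paper's proof.

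\textbf{The paper's route.} It first uses the scaling $g_{a,0}(r,z)=a^{-2}g_{1,0}(r/a,z/a)$ and translation in $z$ to reduce to $a=1$, $b=0$. It then splits according to the \emph{level} $\tau$:
\begin{itemize}
\item for $\tau\ge1$ the superlevel set lies in the disk $(r-1)^2+z^2\le\tau^{-2}$, where $r\le2$;
\item for $\tau<1$ it uses the far-field decay $g\le 2/(r^2+z^2)$ outside $B_2$ to place the set inside a ball about the origin, either $B_2$ or $B_{\sqrt{2/\tau}}$.
\end{itemize}
It then takes the maximum over the three $\tau$-ranges.

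\textbf{Your route.} You instead split the \emph{domain}, for every $\tau$ at once, by the distance $\rho$ to the singular point $(a,b)$ relative to the scale $a$, and you add the two pieces.
\begin{itemize}
\item For $\rho\le a$, you use $g\le 1/(a\rho)$ and $r\le 2a$, which gives at most $4\pi/\tau^2$.
\item For $\rho\ge a$, you use $a^2+r^2+(z-b)^2\ge\frac12\rho^2$, so $\rho^2<\sqrt2/\tau$, together with $r\le2\rho$. This gives at most $\int_0^{2\pi}\int_0^{(\sqrt2/\tau)^{1/2}}4\rho^3\,\diff\rho\,\diff\theta=4\pi/\tau^2$.
\end{itemize}
The total is $8\pi/\tau^2$, so no further ``tuning'' is needed.

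\textbf{What each buys.} Each of your two bounds is already scale invariant, so you need neither the normalization step nor a case analysis in $\tau$, and the origin of the constant is transparent. The paper's version is geometrically a bit simpler once normalized, since it only uses disks and balls about fixed points. Since $r\ge0$ and $a>0$ give $a^2+r^2\ge(r-a)^2$, you could even sharpen the far piece to $2\pi/\tau^2$.

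\textbf{Write-up fixes.}
\begin{itemize}
\item Drop the abandoned $\{r\rho<1/\tau\}$ attempt.
\item Drop the remark about refining the angular factor to $\pi$. For $\rho\le a$ the whole disk about $(a,b)$ lies in $\{r\ge0\}$, and your computation with $2\pi$ already gives $4\pi/\tau^2$.
\item Replace the $\lesssim$ statements in the second regime by the explicit computation above.
\end{itemize}
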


\begin{proof}
    We will prove the result for the case $a=1, b=0$, and then obtain the rest by scaling and translation invariance. For convenience, define $g:=g_{1,0}$.
    Observe that
    \begin{align}
    g_{a,0}(r,z)
    &=\frac{1}{a^2}
    \left(\frac{1}{\left(1+\frac{r^2}{a^2}
    +\frac{z^2}{a^2}\right)^\frac{1}{2}}\right)
    \left(\frac{1}
    {\left(\left(\frac{r^2}{a^2}-1\right)+
    \left(\frac{z^2}{a^2}\right)\right)^\frac{1}{2}}\right) \\
    &=
    \frac{1}{a^2}g\left(\frac{r}{a},\frac{z}{a}\right).
    \end{align}
    This is precisely the scale invariance for $L^{2,\infty}\left(r^2\diff r \diff z\right)$---note that this space has the same scale invariance as $L^2\left(\mathbb{R}^4\right)$---and so for all $a>0$,
    \begin{equation}
    \|g_{a,0}\|_{L^{2,\infty}}
    =
    \|g\|_{L^{2,\infty}}.
    \end{equation}
    Furthermore $L^{2,\infty}(r^2\diff r\diff z)$ is invariant under translations in $z$, and so for all $a>0, b\in\mathbb{R}$, 
    \begin{equation}
    \|g_{a,b}\|_{L^{2,\infty}}
    =
    \|g_{a,0}\|_{L^{2,\infty}}
    =
    \|g\|_{L^{2,\infty}}.
    \end{equation}
    Therefore, it suffices to show that $g\in L^{2,\infty}(r^2\diff r \diff z)$.

    To do this we will consider the set
    \begin{equation}
    \Omega_\tau=
    \left\{(r,z):g(r,z)\geq \tau\right\}
    \end{equation}
    separately in the range $0<\tau\leq 1$ and $\tau\geq 1$. Suppose $\tau\geq 1$.
    Then $g(r,z)\geq \tau$ implies that
    \begin{equation}
    (r-1)^2+z^2 \leq \frac{1}{\tau^2}.
    \end{equation}
    Therefore, we can compute that for all $\tau\geq 1$
    \begin{align}
    \mu\{\Omega_\tau\}
    &=
    \iint_{\Omega_\tau} r^2\diff r\diff z \\
    &\leq 
    \iint_{(r-1)^2+z^2\leq \frac{1}{\tau^2}} 
    r^2\diff r\diff z \\
    &\leq 
    4\iint_{(r-1)^2+z^2\leq \frac{1}{\tau^2}} 
    \diff r\diff z \\
    &=
    \frac{8\pi}{\tau^2}.
    \end{align}
    It then follows immediately that
    \begin{equation} \label{BoundaA}
    \sup_{\tau\geq 1}
    \tau \mu(\Omega_\tau)^\frac{1}{2}
    \leq 
    \sqrt{8\pi}.
    \end{equation}

    Now assume $0\leq \tau<1$.
    We define $B_\rho$ by
    \begin{equation}
    B_\rho=
    \left\{(r,z) \in
    \mathbb{R}^+\times\mathbb{R}: 
    r^2+z^2\leq \rho^2\right\},
    \end{equation}
    and let
    \begin{equation}
    \Tilde{\Omega}_\tau=\Omega_\tau\cap B_2^c.
    \end{equation}
    Observe that if $r^2+z^2\geq 4$, then 
    \begin{equation}
    \frac{(r-1)^2+z^2}{r^2+z^2}\geq \frac{1}{4},
    \end{equation}
    and so if $(r,z)\in \Tilde{\Omega}_\tau$, then
    \begin{align}
    \tau
    &\leq
    g(r,z) \\
    &= 
    \frac{1}{(1+r^2+z^2)^\frac{1}{2}
    ((r-1)^2+z^2)^\frac{1}{2}} \\
    &\leq  \frac{2}{r^2+z^2},
    \end{align}
    and consequently
    \begin{equation}
    r^2+z^2 \leq \frac{2}{\tau}.
    \end{equation}
    
    If $\frac{1}{2}\leq \tau<1$, then $\Tilde{\Omega}_\tau=\emptyset$. Therefore, $\Omega_\tau\subset B_2$, and so
    \begin{align}
    \mu(\Omega_\tau)
    &\leq
    \mu(B_2) \\
    &=
    \iint_{\substack{r^2+z^2\leq 4 \\r\geq 0}} r^2 \diff r\diff z \\
    &\leq 
    \pi\int_0^2 \rho^3\diff \rho \\
    &=
    4\pi.
    \end{align}
    We can then compute that
    \begin{equation} \label{BoundB}
    \sup_{\frac{1}{2}\leq \tau<1}
    \tau \mu(\Omega_\tau)^\frac{1}{2}
    \leq 
    \sqrt{4\pi}.
    \end{equation}

    Finally, if $0< \tau\leq \frac{1}{2}$, then $\Tilde{\Omega}_\tau \subset B_{\sqrt{\frac{2}{\tau}}}$, and so $\Omega_\tau \subset B_2 \cup\Tilde{\Omega}_\tau \subset 
    B_{\sqrt{\frac{2}{\tau}}}$.
    Therefore, we may compute that
    \begin{align}
    \mu(\Omega_\tau)
    &\leq 
    \mu(B_{\sqrt{\frac{2}{\tau}}}) \\
    &=
    \iint_{\substack{r^2+z^2\leq \frac{2}{\tau} \\r\geq 0}} r^2 \diff r\diff z \\
    &\leq 
    \pi\int_0^{\sqrt{\frac{2}{\tau}}} \rho^3\diff \rho \\
    &=
    \frac{\pi}{\tau^2}.
    \end{align}
    We can then compute that
    \begin{equation} \label{BoundC}
    \sup_{0<\tau\leq \frac{1}{2}}
    \tau \mu(\Omega_\tau)^\frac{1}{2}
    \leq 
    \sqrt{\pi}.
    \end{equation}
    Putting together \eqref{BoundaA}, \eqref{BoundB}, and \eqref{BoundC}, we find that
    \begin{equation}
    \|g\|_{L^{2,\infty}}
    =
    \sup_{\tau>0} \tau 
    \mu(\Omega_\tau)^\frac{1}{2} 
    \leq 
    \sqrt{8\pi},
    \end{equation}
    which completes the proof.
\end{proof}

\begin{theorem} \label{StretchingBoundThm}
    There exists an absolute constant $C>0$ such that for all $u\in H^s\left(\mathbb{R}^4\right), s>4$, axisymmetric and swirl-free, and satisfying $\frac{\omega}{r^2}\in L^{2,1}(r^2\diff r\diff z)$, we have
    \begin{equation}
    \left\|\frac{u_r}{r}\right\|_{L^\infty}
    \leq C\left\|\frac{\omega}{r^2}\right\|_{L^{2,1}},
    \end{equation}
    where $C>0$ is an absolute constant independent of $u$.
\end{theorem}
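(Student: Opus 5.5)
The plan is to combine \Cref{VeloBoundProp}, \Cref{gLemma}, and the Lorentz-space H\"older inequality \Cref{HolderProp}. Fix $r>0$ and $z\in\mathbb{R}$. The case $r=0$ follows by continuity of $\frac{u_r}{r}$, or simply from the fact that the bound is uniform in $r>0$.

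First, divide the estimate of \Cref{VeloBoundProp} by $r$. The factor $r$ in front cancels exactly, which gives
\begin{equation}
\left|\frac{u_r(r,z)}{r}\right|
\leq
\frac{8}{\pi}\int_{-\infty}^\infty\int_0^\infty
g_{r,z}(\rb,\zb)\,|\omega(\rb,\zb)|\diff\rb\diff\zb .
\end{equation}
Here $g_{r,z}$ is the function from \Cref{gLemma} with $a=r$ and $b=z$, evaluated at $(\rb,\zb)$. This matches the expression in \Cref{VeloBoundProp}, because $(\rb-r)^2=(r-\rb)^2$ and the first factor is symmetric in $r,\rb$.

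Next, rewrite the integrand against the physical measure $\rb^2\diff\rb\diff\zb$:
\begin{equation}
g_{r,z}(\rb,\zb)\,|\omega(\rb,\zb)|\diff\rb\diff\zb
=
g_{r,z}(\rb,\zb)\left|\frac{\omega(\rb,\zb)}{\rb^2}\right|\rb^2\diff\rb\diff\zb .
\end{equation}
Applying \Cref{HolderProp} on $X=\mathbb{R}^+\times\mathbb{R}$ with $\mu=\rb^2\diff\rb\diff\zb$ pairs $\frac{\omega}{\rb^2}\in L^{2,1}$ against $g_{r,z}\in L^{2,\infty}$. (The proposition as stated swaps the labels $f,g$ in the norms, but the pairing is clearly the intended $L^{2,1}$--$L^{2,\infty}$ duality.) Then \Cref{gLemma} gives $\|g_{r,z}\|_{L^{2,\infty}}\leq\sqrt{8\pi}$ uniformly in $r>0$ and $z$. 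Together these yield
\begin{equation}
\left|\frac{u_r(r,z)}{r}\right|\leq \frac{8}{\pi}C_H\sqrt{8\pi}\left\|\frac{\omega}{r^2}\right\|_{L^{2,1}}.
\end{equation}
Taking the supremum over $(r,z)$ finishes the proof with $C=\frac{8\sqrt{8\pi}}{\pi}C_H$.

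The only genuine point to check is uniformity: the $L^{2,\infty}$ bound on the kernel must not degenerate as $r\to0$ or as $r\to\infty$. This is exactly what the scaling argument in \Cref{gLemma} provides, since $L^{2,\infty}(r^2\diff r\diff z)$ has the same scaling as the kernel. Beyond that, one should confirm that applying H\"older is legitimate, which only requires measurability and the finiteness of the two quasinorms; both hold by hypothesis and by \Cref{gLemma}. With these checks done, the argument is essentially a direct composition of the preceding results.
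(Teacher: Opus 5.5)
Your proposal is correct and matches the paper's proof: divide \Cref{VeloBoundProp} by $r$, rewrite the integral against $\rb^2\diff\rb\diff\zb$, apply the $L^{2,1}$--$L^{2,\infty}$ H\"older inequality together with the uniform kernel bound from \Cref{gLemma}, and handle $r=0$ by continuity (or because it has measure zero). Your constant $C=\frac{8}{\pi}C_H\sqrt{8\pi}$ also keeps the factor $\frac{8}{\pi}$, which the paper's displayed chain of inequalities drops.
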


\begin{proof}
    We can see from \Cref{VeloBoundProp} that for all $r>0, z\in \mathbb{R}$,
    \begin{equation}
    \left|\frac{u_r}{r}(r,z)\right|
    \leq 
    \frac{8}{\pi}
    \int_{-\infty}^\infty \int_0^\infty 
    g_{r,z}(\rb,\zb)
    \left|\frac{\omega}{r^2}(\rb,\zb)\right| 
    \rb^2 \diff\rb \diff \zb.
    \end{equation}
    Applying \Cref{HolderProp}, the generalized H\"older inequality for Lorentz spaces with exponents $(2,\infty)$ and $(2,1)$, and \Cref{gLemma} we can see that for all $r>0, z\in \mathbb{R}$, 
    \begin{align}
    \left|\frac{u_r}{r}(r,z)\right|
    &\leq 
    C_H\|g_{r,z}\|_{L^{2,\infty}} 
    \left\|\frac{\omega}{r^2}\right\|_{L^{2,1}} \\
    &=
    C_H\|g\|_{L^{2,\infty}} 
    \left\|\frac{\omega}{r^2}\right\|_{L^{2,1}}.
    \end{align}
    This completes the proof, because the set $r=0$ has measure zero, but note that the bound holds for $r=0$ as well. The regularity assumptions on $u$ imply that $\frac{u_r}{r}$ is a continuous function with $\frac{u_r}{r}(0,z):=\partial_r u_r(0,z)$, and so the pointwise bound can be obtained in this case by passing to the limit.
\end{proof}

\section{Global regularity}

In this section, we will show that the Lorentz quasi-norm $\|\frac{\omega}{r^2}\|_{L^{2,1}}$ is a conserved quantity for the Euler equation. When combined with the bound on the rate of vortex stretching given in \Cref{StretchingBoundThm}, this immediately leads to global regularity by providing an exponential bound on the growth of vorticity in $L^\infty$.

\begin{proposition} \label{InvariantProp}
    Suppose $u\in C\left([0,T_{max}); H^s_{df}\left(\mathbb{R}^4\right)\right), s>4$ is an axisymmetric, swirl-free solution of Euler's equation, and that $\frac{\omega^0}{r^2}\in L^{2,1}\left(\mathbb{R}^4\right)$. Then for all $0<t<T_{max}$,
    \begin{equation}
    \left\|\frac{\omega}{r^2}(\cdot,t)
    \right\|_{L^{2,1}}
    =
    \left\|\frac{\omega^0}{r^2}
    \right\|_{L^{2,1}}
    \end{equation}
\end{proposition}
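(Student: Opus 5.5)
The plan is to show that the distribution function of $\frac{\omega}{r^2}$ is invariant in time. The $L^{2,1}$ quasinorm depends only on $\tau\mapsto\mu(\{x:|f(x)|>\tau\})$, so this suffices. The two inputs are that $\frac{\omega}{r^2}$ is transported by the flow when $d=4$, and that the flow map of a divergence-free velocity field preserves Lebesgue measure on $\mathbb{R}^4$.

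First we set up the Lagrangian flow. Since $u\in C([0,T_{max});H^s_{df}(\mathbb{R}^4))$ with $s>4>1+\frac{4}{2}$, Sobolev embedding gives $\nabla u\in L^\infty$ locally uniformly in time, so $u$ is Lipschitz in $x$. Hence the flow map $X(\alpha,t)$, defined by $\partial_t X(\alpha,t)=u(X(\alpha,t),t)$ and $X(\alpha,0)=\alpha$, exists for $0\le t<T_{max}$ and is a bi-Lipschitz (indeed $C^1$) diffeomorphism of $\mathbb{R}^4$. Because $\nabla\cdot u=0$, Liouville's formula gives $\partial_t\det\nabla_\alpha X=(\nabla\cdot u)\det\nabla_\alpha X=0$. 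Thus $\det\nabla_\alpha X\equiv 1$, and $X(\cdot,t)$ preserves Lebesgue measure. Since $u$ is axisymmetric and swirl-free, $X(\cdot,t)$ commutes with rotations in the first three coordinates.

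Second, we use the transport equation $(\partial_t+u\cdot\nabla)\frac{\omega}{r^2}=0$. This gives $\frac{\omega}{r^2}(X(\alpha,t),t)=\frac{\omega^0}{r^2}(\alpha)$ wherever $r(\alpha)>0$. The axis $\{r=0\}$ has measure zero. It is also invariant under the flow, since $u_r=0$ there, so $X$ maps $\{r>0\}$ bijectively onto itself. This is the main obstacle: $\frac{\omega}{r^2}$ may be singular at the axis, so we cannot differentiate it there. I would handle this by working only on $\{r>0\}$, where $\frac{\omega}{r^2}$ is $C^1$ and the equation holds classically. It follows from \eqref{VortEqn} with $d=4$ together with $(\partial_t+u\cdot\nabla)r^2=2ru_r$, which gives $(\partial_t+u\cdot\nabla)\frac{\omega}{r^2}=\frac{1}{r^2}\left(2\frac{u_r}{r}\omega\right)-\frac{2ru_r\omega}{r^4}=0$. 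Integrating along trajectories, which stay in $\{r>0\}$ by uniqueness of ODE solutions, gives the Lagrangian identity.

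Finally, for every $\tau>0$ we have
\begin{equation}
\left\{x:\left|\tfrac{\omega}{r^2}(x,t)\right|>\tau\right\}\cap\{r>0\}
=
X\left(\left\{\alpha:\left|\tfrac{\omega^0}{r^2}(\alpha)\right|>\tau\right\}\cap\{r>0\},\,t\right).
\end{equation}
Measure preservation then gives equal Lebesgue measure on $\mathbb{R}^4$, and the axis contributes nothing. Therefore the distribution functions agree for all $\tau$. Plugging into the definition of the $L^{2,1}$ quasinorm, with $\mu$ equal to Lebesgue measure on $\mathbb{R}^4$ (equivalently $r^2\diff r\diff z$ up to the factor $4\pi$), yields the claimed equality.
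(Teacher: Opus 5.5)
Your proposal is correct and follows the same route as the paper: $\frac{\omega}{r^2}$ is constant along trajectories of a measure-preserving flow, so its distribution function, and hence its $L^{2,1}$ quasinorm, does not change in time. Your handling of the axis, where $\frac{\omega}{r^2}$ may be singular, and your Liouville argument for measure preservation on $\mathbb{R}^4$ (rather than for the $(r,z)$ half-plane with $r^2\,\diff r\,\diff z$) supply details that the paper only asserts.
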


\begin{proof}
    We know that $\frac{\omega}{r^2}$ is transported by the flow with
    \begin{equation} \label{Advect}
    (\partial_t+(u\cdot\nabla))\frac{\omega}{r^2}=0.
    \end{equation}
    The velocity $u$ is divergence free, so the flow map
    \begin{equation}
    \partial_t X(r,z,t)=u(X(r,z,t),t)
    =\big(u_r(X(r,z,t),t),u_z(X(r,z,t),t)\big),
    \end{equation}
     with initial condition
     $X(r,z,0)=(r,z)$,
     is a volume preserving diffeomorphism. The advection equation \eqref{Advect} tells us that
     \begin{equation}
    \frac{\omega}{r^2}\circ X=\frac{\omega^0}{r^2}.
     \end{equation}
     Putting these two facts together, we can conclude that
     \begin{align}
         \mu\left(\left\{(r,z):\left|\frac{\omega}{r^2}
        (r,z,t)\right|
        \geq\tau\right\}\right)
        &=
        \mu\left(\left\{X(r,z,t):\left|\frac{\omega}{r^2}
        (r,z,t)\right|
        \geq\tau\right\}\right) \\
        &=
        \mu\left(\left\{(r,z):\left|\frac{\omega}{r^2}
        \circ X(r,z,t)\right|
        \geq\tau\right\}\right) \\
        &=
        \mu\left(\left\{(r,z):\left|\frac{\omega^0}{r^2}
        (r,z)\right|
        \geq\tau\right\}\right).
     \end{align}
     It then immediately follows that
     \begin{equation}
    \left\|\frac{\omega}{r^2}(\cdot,t)
    \right\|_{L^{2,1}}
    =
    \left\|\frac{\omega^0}{r^2}
    \right\|_{L^{2,1}}.
     \end{equation}
\end{proof}

\begin{theorem} \label{GlobalExistenceThm}
    Suppose $u^0 \in H^s_{df}\left(\mathbb{R}^4
    \right), s>4$ is axisymmetric, swirl-free and $\frac{\omega^0}{r^2}\in L^{2,1}
    \left(r^2\diff r \diff z\right)$. Then there exists a unique, global smooth solution of the Euler equation $u\in C\left([0,+\infty); H^s_{df}\left(\mathbb{R}^4
    \right)\right)$, and for all $0<t<+\infty$, this solution is axisymmetric, swirl-free and satisfies
    \begin{equation}
    \|\omega(\cdot,t)\|_{L^\infty}
    \leq 
    \left\|\omega^0\right\|_{L^\infty}
    \exp\left(C\left\|\frac{\omega^0}{r^2}
    \right\|_{L^{2,1}}t\right),
    \end{equation}
    where $C>0$ is an absolute constant independent of $u^0$.
\end{theorem}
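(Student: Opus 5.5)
We will combine the local theory in \Cref{LocalWpThm} with the two results just established: the conservation of $\left\|\frac{\omega}{r^2}\right\|_{L^{2,1}}$ in \Cref{InvariantProp} and the stretching bound in \Cref{StretchingBoundThm}. First, \Cref{LocalWpThm} gives a unique solution $u\in C\left([0,T_{max});H^s_{df}\left(\mathbb{R}^4\right)\right)$. Axisymmetry and the absence of swirl persist for $0<t<T_{max}$. By \Cref{InvariantProp}, for every such $t$ we have $\left\|\frac{\omega}{r^2}(\cdot,t)\right\|_{L^{2,1}}=\left\|\frac{\omega^0}{r^2}\right\|_{L^{2,1}}$. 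Then \Cref{StretchingBoundThm} gives the uniform-in-time bound
\begin{equation}
\left\|\frac{u_r}{r}(\cdot,t)\right\|_{L^\infty}\leq C\left\|\frac{\omega^0}{r^2}\right\|_{L^{2,1}}
\end{equation}
for all $0\leq t<T_{max}$.

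Next, we will use the vorticity equation \eqref{VortEqn} with $d=4$, namely $(\partial_t+u\cdot\nabla)\omega=2\frac{u_r}{r}\omega$. Along the flow map $X$ this is the linear ODE
\begin{equation}
\frac{\diff}{\diff t}\,\omega(X(r,z,t),t)=2\frac{u_r}{r}(X(r,z,t),t)\,\omega(X(r,z,t),t).
\end{equation}
Integrating gives
\begin{equation}
|\omega(X(r,z,t),t)|=|\omega^0(r,z)|\exp\left(2\int_0^t\frac{u_r}{r}(X(r,z,s),s)\diff s\right)\leq\left\|\omega^0\right\|_{L^\infty}\exp\left(2C\left\|\frac{\omega^0}{r^2}\right\|_{L^{2,1}}t\right).
\end{equation}
The flow map is a diffeomorphism of the half-plane, since $u_r(0,z)=0$ and the axis is invariant. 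So taking the supremum over $(r,z)$ yields the stated $L^\infty$ estimate after renaming the constant to absorb the factor $2$. Here we only need that $u$ is $C^1$ in space, which follows from $s>4>3$ by Sobolev embedding, and that $\frac{u_r}{r}$ is continuous, as noted at the end of the proof of \Cref{StretchingBoundThm}.

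Finally, we will argue by contradiction. If $T_{max}<+\infty$, the bound above keeps $\|\omega(\cdot,t)\|_{L^\infty}$ bounded on $[0,T_{max})$. Hence $\int_0^{T_{max}}\|\omega(\cdot,t)\|_{L^\infty}\diff t<+\infty$, which contradicts the Beale--Kato--Majda criterion in \Cref{LocalWpThm}. Therefore $T_{max}=+\infty$, and the estimate holds for all $t>0$.

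The only step that needs some care is justifying the characteristic computation near the axis $r=0$, where $\frac{u_r}{r}$ and $\frac{\omega}{r^2}$ are singular-looking. The plan is to treat $\frac{u_r}{r}$ as the continuous function extended by $\partial_ru_r(0,z)$. Alternatively, we can work in $\mathbb{R}^4$ with $|A|=\frac{1}{\sqrt{2}}|\omega|$ and the Lagrangian flow there, where everything is classical. With that, the rest is routine.
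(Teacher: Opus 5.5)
Your proposal is correct and follows essentially the same route as the paper: local well-posedness, conservation of $\left\|\frac{\omega}{r^2}\right\|_{L^{2,1}}$ via \Cref{InvariantProp}, the uniform stretching bound from \Cref{StretchingBoundThm}, control of $\omega$ along particle trajectories, and the Beale--Kato--Majda criterion. The only difference is cosmetic: you integrate the linear ODE along characteristics explicitly, whereas the paper writes a differential inequality for $\|\omega(\cdot,t)\|_{L^\infty}$ and applies Gr\"onwall; your version is slightly cleaner because it never differentiates the $L^\infty$ norm in time.
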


\begin{proof}
    Let $u\in C\left([0,T_{max}); H^s\left(\mathbb{R}^4
    \right)\right)$ be the local smooth solution of the Euler equation, and note that $u(\cdot,t)$ is also axisymmetric and swirl-free for all $0<t<T_{max}$.
    Following streamlines in the vorticity evolution equation
    \begin{equation}
    \partial_t\omega+(u\cdot\nabla)\omega=
    2\frac{u_r}{r}\omega,
    \end{equation}
    immediately implies that for all $0<t<T_{max}$,
    \begin{equation}
    \frac{\diff}{\diff t}
    \|\omega(\cdot,t)\|_{L^\infty}
    \leq 
    2\left\|\frac{u_r}{r}(\cdot,t)
    \right\|_{L^\infty}
    \|\omega(\cdot,t)\|_{L^\infty}.
    \end{equation}
    Applying \Cref{StretchingBoundThm} and \Cref{InvariantProp}, we can compute that
    \begin{align}
    \frac{\diff}{\diff t}
    \|\omega(\cdot,t)\|_{L^\infty}
    &\leq 
    C \left\|\frac{\omega}{r^2}(\cdot,t)
    \right\|_{L^{2,1}}
    \|\omega(\cdot,t)\|_{L^\infty} \\
    &=
    C \left\|\frac{\omega^0}{r^2}
    \right\|_{L^{2,1}}
    \|\omega(\cdot,t)\|_{L^\infty}.
    \end{align}
    Applying Gr\"onwall's inequality, we find that for all $0<t<T_{max}$,
    \begin{equation} \label{ExpBound}
    \|\omega(\cdot,t)\|_{L^\infty}
    \leq 
    \left\|\omega^0\right\|_{L^\infty}
    \exp\left(C\left\|\frac{\omega^0}{r^2}
    \right\|_{L^{2,1}}t\right).
    \end{equation}
    This implies $T_{max}=+\infty$, because the Beale-Kato-Majda criterion requires that if $T_{max}<+\infty$, then
    \begin{equation}
    \int_0^{T_{max}}
    \|\omega(\cdot,t)\|_{L^\infty}
    \diff t
    =
    +\infty,
    \end{equation}
    which is clearly inconsistent with the bound \eqref{ExpBound}. This completes the proof.
\end{proof}

\section{Hypothesis satisfied for generic data}

In this section, we show that hypothesis $\frac{\omega^0}{r^2}\in L^{2,1}$ holds for all smooth solutions---$C^2$ is sufficient---with reasonable decay at infinity. This implies that the global regularity result in \Cref{GlobalExistenceThm} establishes global regularity very generically, and not only for a subset of initial data, as was the case for the previous results when $d=4$.

\begin{proposition} \label{LorentzBoundProp}
    Suppose $u\in H^s_{df}\left(\mathbb{R}^4\right), s>4$ is axisymmetric, swirl-free, and that there exists $C>0$ such that for all $r\geq 0, z\in\mathbb{R}$,
    \begin{equation}
    |\omega(r,z)|
    \leq
    \frac{C}{\left(1+r^2+z^2\right)^2}.
    \end{equation}
    Then $\frac{\omega}{r^2}\in 
    L^{2,1}(r^2\diff r\diff z)$.
\end{proposition}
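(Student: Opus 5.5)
Our approach is to use \Cref{IntersectionProp} in reverse is not available (it gives an inclusion into $L^p\cap L^r$, not out of it), so instead we bound the distribution function of $f:=\frac{\omega}{r^2}$ with respect to $\mu=r^2\diff r\diff z$ directly and integrate it in the definition of the $L^{2,1}$ quasinorm. Concretely, we will show two estimates on $\mu(\{|f|>\tau\})$, one good for large $\tau$ and one good for small $\tau$, of the form
\begin{equation}
\mu(\{|f|>\tau\})\leq \frac{C_1}{\tau^{p}}, \qquad \mu(\{|f|>\tau\})\leq \frac{C_2}{\tau^{q}},
\end{equation}
with $p>2>q$. Then
\begin{equation}
\|f\|_{L^{2,1}}=2\int_0^\infty \mu(\{|f|>\tau\})^{\frac12}\diff\tau
\leq 2\int_0^1 C_2^{\frac12}\tau^{-\frac q2}\diff\tau+2\int_1^\infty C_1^{\frac12}\tau^{-\frac p2}\diff\tau<+\infty.
\end{equation}

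For large $\tau$ (behaviour near the axis) we use regularity. Since $s>4$, Sobolev embedding gives $u\in C^{2}$, hence $\omega\in C^1$ with bounded derivative. As noted in the introduction, smoothness forces $\omega$ to vanish on the axis, so $|\omega(r,z)|\leq Lr$ with $L=\|\partial_r\omega\|_{L^\infty}$, i.e.\ $|f|\leq L/r$. Combined with the decay hypothesis, $|f|\leq \min\left(\frac{L}{r},\frac{C}{r^2(1+r^2+z^2)^2}\right)$. On $\{|f|>\tau\}$ we then have $r<L/\tau$ and $(1+z^2)^2\leq (1+r^2+z^2)^2< \frac{C}{\tau r^2}$. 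So $\mu(\{|f|>\tau\})\leq \int_0^{L/\tau} r^2\,|\{z:(1+z^2)^2<C/(\tau r^2)\}|\diff r\lesssim \int_0^{L/\tau} r^2 (\tau r^2)^{-1/4}\diff r\lesssim \tau^{-1/4}(L/\tau)^{5/2}$. This gives the exponent $p=\frac{11}{4}>2$, which suffices.

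For small $\tau$ (behaviour at infinity) we use only the decay bound $|f|\leq C r^{-2}(1+\rho^2)^{-2}$ with $\rho^2=r^2+z^2$, together with $|f|\leq L/r$. We split the set $\{|f|>\tau\}$ into the parts $r\geq 1$ and $r<1$. On $r\geq1$ we have $|f|\leq C(1+\rho^2)^{-2}$, so the set lies in the ball $\rho\lesssim\tau^{-1/4}$, whose $\mu$-measure is $\lesssim\tau^{-1}$. On $r<1$, the $z$-extent is bounded using $|f|\leq \min(L/r, Cr^{-2}(1+z^2)^{-2})$, and we estimate $\int_0^1 r^2\min\{\dots\}\diff r$ as in the large-$\tau$ step, which gives at most a power $\tau^{-1/4}$ when $\tau<1$. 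Thus $q=1<2$ works.

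The main obstacle is the near-axis step: $\frac{\omega}{r^2}$ is generally unbounded, so the key is to use the linear vanishing of $\omega$ at $r=0$ together with the $r^2$ weight in $\mu$ to get a tail exponent strictly greater than $2$. We should double-check the bookkeeping there, in particular that the $z$-integration, which is controlled by the decay hypothesis, does not destroy the gain. If that direct bound turns out to be marginal, we will instead split at $r\leq\tau^{-1}$ more carefully, or use $|\omega|\leq Lr\min(1,\cdot)$ interpolated with the decay bound.
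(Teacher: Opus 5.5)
Your proof is correct, but it takes a different route from the paper's in the final step. The paper first interpolates the two pointwise bounds into $|f|\lesssim r^{-4/3}(1+r^2+z^2)^{-2/3}$. It then checks by direct integration that $f\in L^{7/4}\cap L^{17/8}(r^2\diff r\diff z)$, and appeals to the embedding $L^{7/4}\cap L^{17/8}\subset L^{2,1}$. You are right that \Cref{IntersectionProp} is stated with the inclusion reversed. The paper's proof actually uses the correct standard fact $L^p\cap L^r\subset L^{q,1}$ for $p<q<r$, which follows from real interpolation, so the argument is sound apart from that misstatement.

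You instead bound the distribution function directly. The linear vanishing on the axis together with the $z$-decay gives $\mu(\{|f|>\tau\})\lesssim\tau^{-11/4}$, and the decay alone gives $\mu(\{|f|>\tau\})\lesssim\tau^{-1}$ for $\tau<1$. You then integrate $\mu^{1/2}$ in the definition of the quasinorm. In effect you show $f\in L^{1,\infty}\cap L^{11/4,\infty}$ and prove by hand that this intersection embeds into $L^{2,1}$.

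Your route is self-contained and makes explicit which hypothesis controls which end of the $\tau$-range. The paper's route is shorter but relies on an external interpolation result.

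Your bookkeeping checks out. The $z$-section of the level set has length at most $2C^{1/4}\tau^{-1/4}r^{-1/2}$, which gives $\mu(\{|f|>\tau\})\le\frac45 C^{1/4}L^{5/2}\tau^{-11/4}$. So the fallback hedging in your last paragraph is unnecessary, and on the piece $r<1$ you do not need the bound $L/r$ at all.
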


\begin{proof}
    The Sobolev embedding implies that $u\in C^2\left(\mathbb{R}^4\right)$, that $\nabla A\in C\left(\mathbb{R}^4\right)$ and consequently that $\frac{\omega}{r}\in L^\infty.$ See Proposition 3.22 in \cite{MillerTsai} for details.
    This implies that for all $r\geq 0,z\in\mathbb{R}$
    \begin{equation}
    |\omega(r,z)|<Cr,
    \end{equation}
    and that consequently we have the bounds
    \begin{align}
    \left|\frac{\omega}{r^2}(r,z)\right|
    &\leq 
    \frac{C}{r} \\
    \left|\frac{\omega}{r^2}(r,z)\right|
    &\leq 
    \frac{C}{r^2(1+r^2+z^2)^2}
    \end{align}
    Taking the $\frac{2}{3}$ power of the first bound and the $\frac{1}{3}$ power of the second bound, we can see that for all $r\geq 0, z\in\mathbb{R}$
    \begin{equation}
    \left|\frac{\omega}{r^2}(r,z)\right|
    \leq 
    \frac{C}{r^\frac{4}{3}(1+r^2+z^2)^\frac{2}{3}}.
    \end{equation}

    We will now use this bound to control the $L^\frac{7}{4}$ and $L^\frac{17}{8}$ norms. Observe that
    \begin{align}
    \left\|\frac{\omega}{r^2}
    \right\|_{L^\frac{7}{4}}^\frac{7}{4}
    &\leq 
    \int_{-\infty}^\infty \int_0^\infty
    \frac{C^\frac{7}{4}}{r^\frac{7}{3}(1+r^2+z^2)^\frac{7}{6}}r^2\diff r\diff z \\
    &=
    C^\frac{7}{4}
    \int_{-\infty}^\infty \int_0^\infty
    \frac{1}{r^\frac{1}{3}(1+r^2+z^2)^\frac{7}{6}}
    \diff r\diff z \\
    &<
    +\infty.
    \end{align}
    Likewise observe that
    \begin{align}
    \left\|\frac{\omega}{r^2}
    \right\|_{L^\frac{17}{8}}^\frac{17}{8}
    &\leq 
    \int_{-\infty}^\infty \int_0^\infty
    \frac{C^\frac{17}{8}}{r^\frac{17}{6}(1+r^2+z^2)^\frac{17}{12}}r^2\diff r\diff z \\
    &=
    C^\frac{17}{8}\int_{-\infty}^\infty \int_0^\infty
    \frac{1}{r^\frac{5}{6}(1+r^2+z^2)^\frac{17}{12}}\diff r\diff z \\
    &<
    +\infty.
    \end{align}
    We can therefore conclude that 
    $\frac{\omega}{r^2}\in 
    L^\frac{7}{4}(r^2\diff r\diff z)\cap 
    L^\frac{17}{8}(r^2\diff r\diff z)$,
    and so the inclusion
    \begin{equation} 
    L^\frac{7}{4} \cap L^\frac{17}{8}
    \subset L^{2,1} 
    \end{equation}
    from \Cref{IntersectionProp} completes the proof.
\end{proof}

\begin{remark}
    Our result requires that $\frac{\omega^0}{r^2}\in L^{2,1}$, which  is stronger than control in $L^2$. The key idea is that for the $L^2$ norm, which is just slightly weaker than the $L^{2,1}$ quasi-norm, we have
    \begin{equation} \label{ExampleBound}
    \left\|\frac{\omega^0}{r^2}
    \right\|_{L^2\left(\mathbb{R}^4\right)}^2
    =
    4\pi
    \int_{-\infty}^\infty
    \int_0^\infty 
    \left|\frac{\omega^0}{r^2}\right|^2
    r^2\diff r\diff z
    =
    4\pi \int_{-\infty}^\infty
    \int_0^\infty 
    \left|\frac{\omega^0}{r}\right|^2
    \diff r\diff z.
    \end{equation}
    We know that $\frac{\omega^0}{r}$ must be bounded for smooth enough data, so it is straightforward to see that the integral in \eqref{ExampleBound} is finite. It is then not surprising that the $L^{2,1}$ norm is also finite.
\end{remark}

\begin{remark}
    In this paper we prove global regularity for axisymmetric, swirl-free solutions of the Euler equation in $H^s\left(\mathbb{R}^4\right)$, with $s>4$, although the sharp space for local wellposedness only requires $s>3$. There are two reasons for this. First, this control leads to the bound on $\left\|\frac{\omega}{r^2}\right\|_{L^{2,1}}$ in \Cref{LorentzBoundProp}. Furthermore, because we are working with the vorticity formulation, this is enough regularity to guarantee we have a classical solution to the vorticity equation \eqref{VortEqn}. The regularity assumption here for the initial data is not sharp and could be relaxed somewhat; the goal is to prove global regularity in $d=4$ for generic axisymmetric, swirl-free initial data that are sufficiently smooth.
\end{remark}

\bibliographystyle{plain}
\bibliography{bib}

\end{document}